\def\NZQ{\mathbb}               % the font for N,Z,Q,R,C
\def\NN{{\NZQ N}}
\def\QQ{{\NZQ Q}}
\def\ZZ{{\NZQ Z}}
\def\RR{{\NZQ R}}
\def\frk{\frak}               % font for "Fraktur"
\def\Phi{{\frk n}}
\def\Phi{{\frk N}}
\def\MM{{\mathcal M}}
\def\ME{{\mathcal E}}
\def\opn#1#2{\def#1{\operatorname{#2}}} % to make operators
\opn\chara{char} \opn\length{\ell} \opn\pd{pd} \opn\rk{rk}
\opn\projdim{proj\,dim} \opn\injdim{inj\,dim} \opn\rank{rank}
\opn\depth{depth} \opn\grade{grade} \opn\height{height}
\opn\embdim{emb\,dim} \opn\codim{codim}
\opn\Tr{Tr} \opn\bigrank{big\,rank}
\opn\superheight{superheight}\opn\lcm{lcm}
\opn\trdeg{tr\,deg}%\emph{
\opn\reg{reg} \opn\lreg{lreg} \opn\ini{in} \opn\lpd{lpd}
\opn\size{size}\opn\bigsize{bigsize}
\opn\cosize{cosize}\opn\bigcosize{bigcosize}
\opn\sdepth{sdepth}\opn\sreg{sreg}
\opn\link{link}\opn\fdepth{fdepth}
\opn\div{div} \opn\Div{Div} \opn\cl{cl} \opn\Cl{Cl}
\opn\Spec{Spec} \opn\Supp{Supp} \opn\supp{supp} \opn\Sing{Sing}
\opn\Ass{Ass} \opn\Min{Min}\opn\Mon{Mon} \opn\dstab{dstab} \opn\astab{astab}
\opn\Ann{Ann} \opn\Rad{Rad} \opn\Soc{Soc}
\opn\Im{Im} \opn\Ker{Ker} \opn\Coker{Coker} \opn\Am{Am}
\opn\Hom{Hom} \opn\Tor{Tor} \opn\Ext{Ext} \opn\End{End}
\opn\Aut{Aut} \opn\id{id}
\opn\nat{nat}
\opn\pff{pf}%   \pf exists already
\opn\Pf{Pf} \opn\GL{GL} \opn\SL{SL} \opn\mod{mod} \opn\ord{ord}
\opn\Gin{Gin} \opn\Hilb{Hilb}\opn\sort{sort}
\opn\aff{aff} \opn\con{conv} \opn\relint{relint} \opn\st{st}
\opn\lk{lk} \opn\cn{cn} \opn\core{core} \opn\vol{vol}
\opn\link{link} \opn\star{star}\opn\lex{lex} \opn\Gr{Gr}
\opn\gr{gr}
\def\pot#1#2{#1[\kern-0.28ex[#2]\kern-0.28ex]}
\opn\dirlim{\underrightarrow{\lim}}
\opn\inivlim{\underleftarrow{\lim}}
\let\tensor=\otimes
\def\Implies{\ifmmode\Longrightarrow \else
        \unskip${}\Longrightarrow{}$\ignorespaces\fi}
\def\implies{\ifmmode\Rightarrow \else
        \unskip${}\Rightarrow{}$\ignorespaces\fi}
\def\iff{\ifmmode\Longleftrightarrow \else
        \unskip${}\Longleftrightarrow{}$\ignorespaces\fi}
\newtheorem{Theorem}{Theorem}[section]
\newtheorem{Lemma}[Theorem]{Lemma}
\newtheorem{Proposition}[Theorem]{Proposition}
\newtheorem{Remark}[Theorem]{Remark}
\newtheorem{Example}[Theorem]{Example}
\begin{document}

\title[Markov bases and generalized Lawrence liftings]{Markov bases and generalized Lawrence \\liftings}
%\linenumbers
\author[Charalambous]{Hara Charalambous}
\address{ Department of Mathematics\\ Aristotle University of Thessaloniki\\ Thessaloniki 54124\\ Greece}
\email{hara@math.auth.gr}

\author[Thoma]{Apostolos Thoma}
\address{ Department of Mathematics\\ University of Ioannina\\ Ioannina 45110\\ Greece }
\email{athoma@uoi.gr}

\author[Vladoiu]{Marius Vladoiu}
\address{ Faculty of Mathematics and Computer Science\\ University of Bucharest\\ Str. Academiei 14\\ Bucharest, RO-010014\\ Romania}
\email{vladoiu@gta.math.unibuc.ro}

\begin{abstract}
Minimal Markov bases of configurations of integer vectors
 correspond to minimal binomial generating sets of
 the assocciated lattice ideal. We give  necessary and
sufficient conditions for the elements of a minimal Markov basis 
 to be   (a) inside the
  universal Gr{\" o}bner basis
  and (b) inside the
  Graver basis. We study
properties  of  Markov bases of generalized Lawrence liftings for
arbitrary matrices $A\in\mathcal{M}_{m\times n}(\ZZ)$ and
$B\in\mathcal{M}_{p\times n}(\ZZ)$ and show that in cases of interest
the {\em complexity}
of any two Markov bases  is the same. 
\end{abstract}

\thanks{}
\subjclass{Primary 14M25; Secondary 14L32,13P10,62H17}
\keywords{Toric ideals, Markov basis, Graver basis, Lawrence liftings}
 \maketitle

\section{Introduction}

Let $A$ be an element of $\MM_{m\times n}(\ZZ)$, for some positive
integers $m,n$. The object of interest is the lattice
$\mathcal{L}(A):=\Ker_{\ZZ}(A)$.  A {\it Markov basis} $M$ of $A$ is a finite subset  of
$\mathcal{L}(A)$ such that whenever ${{{\bf w}}}, {\bf u}\in \NN^n$
and ${{{\bf w}}}- {\bf u}\in \mathcal{L}(A)$ (i.e. $A {{{\bf w}}}=A {\bf
u}$), there exists a subset $\{ {{\bf v}}_i: i=1,\ldots, s\}$ of $
{M}$ that {\it connects} ${{{\bf w}}}$ to ${\bf u}$. This means that
for   $1\leq p\leq s$, ${{{\bf w}}}+\sum^p_{i=1}{{{\bf v}}_i}\in\NN^n$
and
 ${{{\bf w}}}+\sum^s_{i=1}{{\bf v}}_i ={\bf u}$.
 A
Markov basis $M$ of $A$ gives rise to a generating set of the
lattice ideal
\[ I_{\mathcal{L}(A)}:= \langle x^{\bf u}-x^{{\bf v}}:\ A {\bf u}=A {{\bf v}} \rangle\ .\]
 Each ${{\bf u}}\in \ZZ^n$ can be uniquely
written as ${\bf u}= {{\bf u}}^+ -{\bf u}^-$ where ${\bf u}^+$, ${{\bf u}}^-
\in \NN^n$ are vectors with non-overlapping support. In the seminal work of Diaconis and Sturmfels in
\cite{DS}, it was shown   that   $M$ is a Markov basis of $A$
if and only if the set $\{x^{{\bf u}^+}-x^{{\bf u}^-}: \ {\bf u}\in M\}$ is a
generating set of $I_{\mathcal{L}(A)}$.
  A Markov basis $M$ of $A$  is {\it
minimal} if no subset of $M$ is a Markov basis of $A$. We say that  $\mathcal{L}(A)$ is
  {\em positive } if $\mathcal{L}(A) \cap \NN^{n}= \{
\bf 0\}$ and
  {\em non positive } if $\mathcal{L}(A) \cap \NN^{n}\neq \{
\bf 0\}$. When
$\mathcal{L}(A)$ is positive then the graded Nakayama Lemma
applies and all minimal Markov bases have the same cardinality.
When $\mathcal{L}(A)$ is non positive, it is possible to have
minimal Markov bases of $A$ of different cardinalities, see
\cite{CTV}. It is important to note that
the study of  non positive lattices   has important implications
in the study of positive ones, see for example  the proof of
\cite[Theorem 3]{SS}, \cite[Lemma 5]{HN} and \cite[Theorem 3.5]{HS}. The {\em universal Markov basis} of $A$ will be denoted by ${\MM}(A)$ and
is defined as the union of all minimal Markov bases of $A$ of minimal
cardinality, where we identify a vector ${\bf  u}$ with $-{\bf
u}$, see \cite{CTV, SS}.   The sublattice of $\mathcal{L}(A)$
generated by all elements of $\mathcal{L}(A)\cap \NN^n$ is called the
{\em pure} sublattice of $\mathcal{L}(A)$ and is important when considering minimal Markov bases of $A$, see \cite{CTV}. The pure sublattice of $\mathcal{L}(A)$ is zero exactly when $\mathcal{L}(A)$ is positive.

Let ${{\bf u}},{{\bf v}}, {{\bf w}}$ be non zero vectors in $\ZZ^n$.
If ${{\bf u}}={{\bf v}}+  {{{\bf w}}}$ we write  ${{\bf u}}={{\bf v}}+_c {\bf
w}$ to denote that this sum gives a {\em conformal decomposition}
of ${\bf u}$ i.e.~${\bf u}^+={\bf v}^+ + {{\bf w}}^+$ and ${\bf u}^-={\bf v}^- +
{{\bf w}}^-$. The set  consisting  of all non zero elements of $\mathcal{L}(A)
$ which have no conformal decomposition is denoted by
$\mathcal{G}(A)$ and is
called the {\em Graver basis} of $A$. When ${{\bf u}}\in \mathcal{G}(A)$ the binomial $x^{{\bf u}^+}-x^{{\bf u}^-}$ is called {\em primitive}.  $\mathcal{G}(A)$ is always a finite set, see  \cite{Gr,St}. 
In this paper we examine in detail  when an element of a minimal Markov basis belongs to  $\mathcal{G}(A)$.     We show in
  Theorem~\ref{comb_pure} that ${\MM}(A) \subset \mathcal{G}(A)$ holds
in
just two cases: when $\mathcal{L}(A)$ is positive and when
$\mathcal{L}(A)$ is pure of rank $1$. 
%On the other hand
%${\ME}(A) \subset \mathcal{G}(A)$ only when  $\mathcal{L}(A)$ is
%positive. 
We point out that even though the inclusion for positive
lattices is well known, we could not locate its proof in the
literature, so we provide it here for completeness of the
exposition.

By $\mathcal U(A)$ we denote the universal Gr{\" o}bner basis of
$A$, i.e.~the set which consists of all vectors ${{\bf u}}\in \mathcal{L}(A)$ such
that $ x^{{\bf u}^+}-x^{{\bf u}^-}$ is part of a reduced Gr{\" o}bner basis of
$I_{\mathcal{L}(A)}$ for some term order on $\NN^n$.
 The inclusion $\mathcal U(A)\subset \mathcal G(A)$ always hold (see \cite[Lemma 4.6]{St}). In this paper we examine the relation between  ${\MM}(A)$ and $\mathcal{U}(A)$. In general
 $\MM(A)$ is not a subset of $\mathcal U(A)$ even when $\mathcal{L}(A)$ is
positive as Example~\ref{gr_univ_mark} shows.
 In Theorem~\ref{Grobner} we give
 a necessary and sufficient condition for $\MM(A)$ to be contained in $\mathcal U(A)$  when $\mathcal{L}(A)$ is positive.

In Section \ref{complexity}, for   $r\ge 2$,  $B\in \MM_{p\times n}(\ZZ)$,   we study   
the {\it generalized Lawrence lifting} $\Lambda
(A,B,r)$:
\[
\Lambda
(A,B,r)=\begin{array}{c} \overbrace{\quad\quad\quad \quad \quad\ \ }^{r-\textrm{times}} \\
 \left( \begin{array}{cccc}
\ A\ & 0 &   & 0 \\
0 &\ A\ &  & 0 \\
 & & \ddots &  \\
0 & 0 &  &\ A\ \\
B & B & \cdots & B
\end{array} \right)\end{array} \ .\]
When $B=I_n$ one gets the usual $r$--th {\it Lawrence lifting}
$A^{(r)}$, see \cite{SS}. Such liftings were used to prove for example the finiteness of the Graver basis of $A$ and are
connected to hierarchical models in Algebraic Statistics, 
see \cite{SS, HS}. We denote the columns of $A$ by ${\bf a}_1,
\dots, {\bf a}_n$ and the columns of $B$ by   ${\bf b}_1, \dots, {\bf b}_n$.  The $(rm+p)\times rn$ matrix $\Lambda(A, B, r)$ has columns the vectors
$$\{{\bf a}_i\tensor {\bf e}_j\oplus {\bf b}_i:\ 1\leq i\leq n, 1\leq j\leq r   \},$$
where ${\bf e}_1,\ldots,{\bf e}_n$ represents the canonical basis of $\ZZ^n$.
 Note that $\mathcal{L} ( \Lambda(A, B, r))$ is a sublattice of
$\ZZ^{rn}$.  Let $C \in \mathcal{L} ( \Lambda(A, B, r))$.
We can assign to  $C$ an $r\times n$ matrix   $\mathcal{C}$ such that  $\mathcal{C}_{i,j}=C_{(i-1)n+j}$.
Each row of $\mathcal{C}$ corresponds to an element of $\mathcal{L} (A)$ and the sum of the rows of $\mathcal{C}$
corresponds to an element in $\mathcal{L} (B)$. The number of nonzero rows of $\mathcal{C}$ is the
{\em type} of $C$. The {\em complexity} of any subset of  $\Lambda(A, B,
r)$ is the largest
type of any vector in that set. 
 
For $r\ge 2$ we consider the Graver basis of $\Lambda (A,B,r)$, $\mathcal{G}(\Lambda (A,B,r))$.
We let the {\em Graver complexity} of $(A,B)$  be  
the supremum over $r$ of   the complexities of $\mathcal{G}(\Lambda (A,B,r))$   and denote it  by $g(A,B)$.
By  \cite[Theorem 3.5]{HS},  $g(A,B)$ is finite and equals the maximum $1$-norm of the elements in the Graver
basis of the matrix $B\cdot \mathcal Gr(A)$, where $\mathcal Gr(A)$ is
the matrix whose columns are   the elements of  $\mathcal{G}(A)$.  
In the literature there are two definitions of  Markov complexity of $(A,B)$. The first   introduced in \cite{SS}  
defines the Markov complexity  of $(A,B)$ as the smallest integer $m$ such that there exists a Markov basis  of
$\Lambda (A,B,r)$ of type less than or equal to $m$ for any $r\ge 2$.  It is always finite and bounded by  $g(A,B)$.
The second definition given in \cite{HS}  defines the Markov complexity of  $(A,B)$
  as  the largest type of any element in the universal Markov basis of $\Lambda (A,B,r)$ as $r$ varies.
  The main result of   Section 3 is Theorem \ref{same_complexity}. 
It states that   when $\mathcal{L} ( \Lambda(A, B, r))$  is positive, 
 all minimal Markov bases of $\Lambda(A, B, r)$  
 have the same complexity. This is computationally essential, since to compute the complexity of any minimal Markov basis of $\Lambda(A, B, r)$ one can
 start with any monomial order in $\NN^n$, compute the reduced Gr{\" o}bner basis of $\Lambda(A, B, r)$, 
 eliminate extraneous elements to obtain a minimal Markov basisof $\Lambda(A, B, r)$ and then read the largest type of the remaining elements. 
We note that in general $\mathcal{L} ( \Lambda(A, B, r))$ is   
positive for  some  $r\ge 2$ if and only if $\mathcal{L} ( \Lambda(A, B, r))$ is   positive for 
all  $r\ge 2$
 and to decide whether this holds it suffices to check whether the lattice
 $\Ker_{\ZZ}(A)\cap \Ker_{\ZZ}(B)$ is   positive (see Lemma~\ref{kernel}). 
When $\Ker_{\ZZ}(A)\cap \Ker_{\ZZ}(B)$  is non positive then Theorem~\ref{prim_lem} shows that  
the Markov complexities of $\Lambda (A,B,r)$ cannot be bounded as $r$ varies.
 In  Example \ref{final_example} we give matrices $A, B$ such that for all $r\ge 2$, $\mathcal{L} ( \Lambda(A, B, r))$  
 has a minimal  Markov basis of  complexity $k$  for every $2\le k\le r$. 
 Finally, in  Remark \ref{markov-complexity} we discuss implications of our work to the two  notions of Markov complexity.

\section{Universal Markov and Gr{\" o}bner bases}

For simplicity of notation we write
  $\mathcal{L}$ for $\mathcal{L}(A)$,  ${\MM}$ for $\MM (A)$,
  $ \mathcal{G}$ for $\mathcal{G}(A)$, $ \mathcal{U}$ for
  $ \mathcal{U}(A)$ and
 $\mathcal{L}_{pure}$ for the pure  sublattice of $ \mathcal{L}$ generated by the elements in $\mathcal{L}\cap \NN^n$.
\begin{Theorem}\label{from_ctv}{\rm \cite[Theorem 4.18]{CTV}}
If i) $\rank(\mathcal{L}_{pure })>1$ or ii) $\rank(\mathcal{L}_{pure })=1$ and $\mathcal{L}\neq \mathcal{L}_{pure}$
then %the universal Markov basis of $A$,
${\MM} $ is infinite.
\end{Theorem}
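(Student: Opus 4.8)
The plan is to exploit the pure elements of $\mathcal{L}$ to manufacture infinitely many distinct vectors, each of which occurs in some minimal Markov basis of minimal cardinality. Both hypotheses force $\mathcal{L}$ to be non positive and, more precisely, guarantee a pair consisting of a nonzero pure element ${\bf p}\in\mathcal{L}\cap\NN^n$ and a second vector ${\bf u}\in\mathcal{L}$ that is genuinely independent of the ${\bf p}$-direction: when $\rank(\mathcal{L}_{pure})>1$ I would take ${\bf p}={\bf p}_1$ and ${\bf u}={\bf p}_2$ for two $\ZZ$-independent pure generators ${\bf p}_1,{\bf p}_2$, while when $\rank(\mathcal{L}_{pure})=1$ and $\mathcal{L}\neq\mathcal{L}_{pure}$ I would take ${\bf p}$ a primitive pure generator and ${\bf u}\in\mathcal{L}\setminus\mathcal{L}_{pure}$. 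First I would record that the situation excluded by the hypotheses, namely $\mathcal{L}=\mathcal{L}_{pure}$ of rank $1$ (and a fortiori the positive case $\mathcal{L}_{pure}=0$), admits no such pair, which is exactly why the conclusion must fail there.

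The heart of the construction is the translation family ${\bf u}_k:={\bf u}+k{\bf p}$, $k\geq 0$. Since ${\bf p}\geq{\bf 0}$, beyond some threshold $k_0$ the coordinates of ${\bf u}_k$ on $\supp({\bf p})$ are strictly positive, so ${\bf u}_k^-$ stabilises to a fixed vector ${\bf d}$ while ${\bf u}_k^+={\bf u}_{k_0}^+ + (k-k_0){\bf p}$. Because $A{\bf p}={\bf 0}$, all the ${\bf u}_k$ with $k\geq k_0$ then lie in one and the same fiber $F:=\{{\bf w}\in\NN^n: A{\bf w}=A{\bf u}_{k_0}^+\}$; they are pairwise distinct, and each of them connects the vertex ${\bf d}$ to a distinct vertex ${\bf u}_{k_0}^+ + (k-k_0){\bf p}$ of the ${\bf p}$-line sitting inside $F$. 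I would then verify that $F$ is a genuine Markov degree, i.e.\ that the component of ${\bf d}$ and the ${\bf p}$-line are not already connected by moves of strictly smaller degree, using that ${\bf u}\notin\mathcal{L}_{pure}$ so that the relation it encodes cannot be synthesised from pure moves alone.

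Next I would invoke the connectivity characterisation of Markov bases from \cite{DS}, refined to record minimal generators degree by degree: a minimal Markov basis is assembled by choosing, in each Markov degree, a minimal set of edges reconnecting the fiber graph modulo the moves of strictly smaller degree. In the degree of $F$ the move ${\bf p}$ reconnects the ${\bf p}$-line internally, after which exactly one further rung is needed to attach the component of ${\bf d}$; any single ${\bf u}_k$ supplies such a rung, and every such choice contributes the same number of generators in that degree. Hence replacing ${\bf u}_{k_0}$ by an arbitrary ${\bf u}_k$ in a fixed minimal Markov basis of minimal cardinality again yields a minimal Markov basis of the same, minimal, cardinality. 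This exhibits $\{{\bf u}_k:k\geq k_0\}\subseteq{\MM}$ and shows that ${\MM}$ is infinite.

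The step I expect to be the main obstacle is the minimal-cardinality bookkeeping. Since $\mathcal{L}$ is non positive, distinct minimal Markov bases need not be equinumerous, so producing infinitely many minimal Markov bases is not enough; one must ensure the infinite family lives among those of \emph{minimal} cardinality. This forces a careful argument that the choice of rung in the degree of $F$ is local, affecting neither the generator count in any other degree nor the availability of ${\bf p}$ (or the appropriate primitive pure move) as the connector of the ${\bf p}$-line. Pinning down that this pure move is itself forced, and that the degree of $F$ really is a Markov degree with the two-component reconnection structure described above, is where the fine structure of minimal Markov bases of non positive lattices has to be used in full.
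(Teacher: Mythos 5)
Your construction is the right one: translating a single Markov basis element by multiples of a pure vector, ${\bf u}_k={\bf u}+k{\bf p}$, is exactly the mechanism behind this theorem. (Note the paper itself gives no proof here; it quotes \cite[Theorem 4.18]{CTV}, and the same translation trick surfaces in Lemma~\ref{in_basis_lem} and Remark~\ref{never_pure}, also imported from \cite{CTV}.) The genuine gap is in how you certify that each ${\bf u}_k$ lies in a minimal Markov basis \emph{of minimal cardinality}. Your certificate is the degree-by-degree picture: ``a minimal Markov basis is assembled by choosing, in each Markov degree, a minimal set of edges reconnecting the fiber graph modulo the moves of strictly smaller degree,'' and ``every such choice contributes the same number of generators in that degree.'' That machinery (graded Nakayama plus fiber-graph spanning forests, as in Theorem~\ref{crit_min}) is a \emph{positive}-lattice tool, while both hypotheses of the theorem force $\mathcal{L}$ to be non positive. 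In the non positive case the grading induced by $A$ is not positive ($x^{\bf p}$ has degree $0$), fibers such as your $F$ are infinite, ``strictly smaller degree'' is not a well-founded induction, and, decisively, the per-degree generator count is \emph{not} an invariant: as in the Remark following Theorem~\ref{comb_pure}, for $\mathcal{L}=\langle{\bf w}\rangle$ with ${\bf 0}\neq{\bf w}\in\NN^n$ both $\{{\bf w}\}$ and $\{k{\bf w},l{\bf w}\}$ with $\gcd(k,l)=1$, $k,l\geq 2$, are minimal Markov bases, of cardinalities $1$ and $2$, all living over the single degree-zero fiber. So the statement you lean on is false in exactly the regime where you need it; repairing it is precisely the content of the structure theorems of \cite{CTV} (Theorems 2.12, 4.1 and 4.11, cited before Lemma~\ref{in_basis_lem}), i.e.\ your argument assumes what has to be proven. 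You flag this yourself as ``the main obstacle,'' but flagging it does not close it.

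There is also a more local defect: in the case $\rank(\mathcal{L}_{pure})>1$ you take ${\bf u}={\bf p}_2$ pure, so ${\bf d}={\bf u}_k^-={\bf 0}$ and your fiber $F$ is $\mathcal{L}\cap\NN^n$, the degree-zero fiber. Your stated justification that $F$ is a genuine Markov degree (``${\bf u}\notin\mathcal{L}_{pure}$'') is vacuous there, and in that fiber ${\bf 0}$ is an isolated vertex of an infinite graph, so the two-component ``one further rung'' picture is not established in the very case that motivates the theorem. Finally, the translation trick cannot be run with an arbitrary pure ${\bf p}$: what \cite{CTV} provides, and what Lemma~\ref{in_basis_lem} records, is an $\mathcal{L}$-primitive element with $\supp({\bf p})=\supp\mathcal{L}_{pure}$, and it is translation by \emph{that} element which preserves membership in a minimal Markov basis of minimal cardinality; your case-one choice of two independent pure generators does not guarantee this support condition.
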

%
% It is   automatic that in these cases, ${\ME}(A)$,   the extended universal Markov basis of $A$,    is  infinite.
%
%
Next we consider the {\em fibers} $\mathcal{F}_{{{\bf u}}}$ of
$I_{\mathcal{L}}$ for any   ${{\bf u}}\in  \mathcal{L}$. We let
$\mathcal{F}_{{{\bf u}}}:= \{{\bf t}\in\NN^n : {{\bf u}}^+-{\bf t}\in
\mathcal{L}\}$. We note that if $\mathcal{L}$ is positive then
$\mathcal{F}_{{{\bf u}}}$ is a finite set. We construct a graph
${G}_{{\bf u}}$ with vertices   the elements of $\mathcal{F}_{{\bf
u}}$. Two vertices ${{\bf w}}_1$, ${{\bf w}}_2$ are joined by an edge if there is 
an index $i$ such that $i$-th component of ${{\bf w}}_1$ and ${{\bf w}}_2$ are nonzero. 
Thus $\bf w_1$, $\bf w_2$ are joined by an edge  if and only if   $({\bf w_1}-{\bf w_2})^+$ is componentwise smaller
than ${{\bf w}}_1$, meaning that at least one component
of their difference  is strictly positive. The following necessary condition for ${{\bf u}}\in
\mathcal{L}$ to be in ${\MM}$ was observed in \cite[Theorem
2.7]{CKT} and \cite[Theorem 1.3.2]{DSS} when $\mathcal{L}$ is
positive.
% that
%appears in \cite[Theorem 2.9]{CKT}, \cite[Theorem 1.3.2]{DSS} and \cite[Theorem 3.13]{CTV}.

\begin{Theorem}\label{crit_min}
 If $\mathcal{L}$ is positive then ${\bf u}$ is in the universal Markov basis of $A$ if and only if ${\bf u}^+$ and
${\bf u}^-$ belong to different connected components of $G_{{\bf u}}$.
\end{Theorem}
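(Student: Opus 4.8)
The plan is to grade everything by the fibers of $A$ and to reduce the Markov--connectivity condition to a connectivity statement about the connected components of the graphs $G_{\bf u}$. Since $\mathcal{L}$ is positive, the cone $\mathcal{L}\tensor\RR\sect\RR_{\geq 0}^n$ is a rational cone meeting $\NN^n$ only in $\{\bf 0\}$, so there is ${\bf c}\in\ZZ_{>0}^n$ with ${\bf c}\cdot{\bf v}=0$ for all ${\bf v}\in\mathcal{L}$; then $\deg({\bf w})={\bf c}\cdot{\bf w}\in\NN$ makes $I_{\mathcal{L}}$ graded, every fiber finite, and the degrees well--ordered, which supports induction. For a value $b$ in the image of $A$ write $\mathcal{F}_b=\{{\bf t}\in\NN^n: A{\bf t}=b\}$ and let $G_b$ be the graph on $\mathcal{F}_b$ defined as in the text, so that $\mathcal{F}_{\bf u}=\mathcal{F}_b$ and $G_{\bf u}=G_b$ for $b=A{\bf u}^+=A{\bf u}^-$; note ${\bf u}^+,{\bf u}^-\in\mathcal{F}_b$.

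The central step is a reduction lemma: for any finite $M\subseteq\mathcal{L}$ that connects every fiber of strictly smaller degree than $b$, the set $M$ connects $\mathcal{F}_b$ if and only if the elements of $M$ of degree $b$ connect the connected components of $G_b$. Two facts drive this. First, any two vertices in the same component of $G_b$ are connected by $M$: if ${\bf w}_1,{\bf w}_2$ share a positive coordinate $i$, then ${\bf w}_1-{\bf e}_i$ and ${\bf w}_2-{\bf e}_i$ lie in a common fiber of smaller degree, which $M$ connects by hypothesis, and translating that connecting sequence by ${\bf e}_i$ keeps it in $\NN^n$; iterating along edges of $G_b$ handles a whole component. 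Second, a move ${\bf v}\in M$ applied at ${\bf w}\in\mathcal{F}_b$ can cross between distinct components only when $A{\bf v}^+=b$: writing ${\bf w}+{\bf v}=({\bf w}-{\bf v}^-)+{\bf v}^+$, whenever $A{\bf v}^-\neq b$ the summand ${\bf w}-{\bf v}^-$ is nonzero, forcing ${\bf w}$ and ${\bf w}+{\bf v}$ to share support (same component); and $A{\bf v}^-=b$ with ${\bf w}\geq{\bf v}^-$ gives ${\bf w}-{\bf v}^-\in\mathcal{L}\sect\NN^n=\{\bf 0\}$ by positivity, so ${\bf w}={\bf v}^-$ and ${\bf v}$ joins only the components of ${\bf v}^-$ and ${\bf v}^+$. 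This is where positivity is essential, and it is the part I expect to require the most care.

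Granting the lemma, induction on degree shows that $M$ is a Markov basis if and only if, in every degree $b$, its degree--$b$ elements connect the components of $G_b$; equivalently they span the \emph{component graph} $\widehat{G}_b$ whose vertices are the components of $G_b$ and whose edges record the pairs joined by a degree--$b$ element. A minimal Markov basis thus restricts in each degree to a spanning tree of $\widehat{G}_b$, using $t_b-1$ elements, where $t_b$ is the number of components (so all minimal Markov bases have the same cardinality, as the graded Nakayama Lemma predicts). The key extra observation is that $\widehat{G}_b$ is \emph{complete}: if ${\bf w},{\bf w}'$ lie in different components of $G_b$ they are nonadjacent, hence have disjoint support, so ${\bf w}-{\bf w}'\in\mathcal{L}$ has degree $b$ with $({\bf w}-{\bf w}')^+={\bf w}$ and $({\bf w}-{\bf w}')^-={\bf w}'$, realizing the edge between their components.

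The theorem now follows by reading off $b=A{\bf u}^+$. If ${\bf u}^+,{\bf u}^-$ lie in the same component of $G_{\bf u}$, then in any Markov basis containing ${\bf u}$ the edge contributed by ${\bf u}$ is a loop of $\widehat{G}_b$ and may be deleted without losing connectivity, so ${\bf u}$ lies in no minimal Markov basis and ${\bf u}\notin\MM$. If ${\bf u}^+,{\bf u}^-$ lie in different components, then ${\bf u}$ realizes a genuine edge of the complete graph $\widehat{G}_b$, which extends to a spanning tree; combining it with spanning trees in the remaining (finitely many) relevant degrees produces a minimal Markov basis containing ${\bf u}$, so ${\bf u}\in\MM$.
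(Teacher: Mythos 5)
Your proof is correct. The paper itself gives no proof of Theorem~\ref{crit_min} — it only cites \cite[Theorem 2.7]{CKT} and \cite[Theorem 1.3.2]{DSS} — and your argument is essentially the standard one found in those references: grade by a strictly positive integer functional vanishing on $\mathcal{L}$ (which exists exactly because $\mathcal{L}$ is positive), show that a move whose fiber differs from $\mathcal{F}_{\bf u}$ can never leave a connected component of $G_{\bf u}$ while a move ${\bf v}$ with the same fiber can only jump from ${\bf v}^-$ to ${\bf v}^+$, and conclude that a minimal Markov basis must select, in each of the finitely many disconnected fibers, a spanning tree of the complete graph on connected components, so that ${\bf u}$ lies in some minimal Markov basis precisely when ${\bf u}^+$ and ${\bf u}^-$ lie in different components.
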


$\mathcal{F}_{{{\bf u}}}$  is called a {\em Markov fiber} when there
exists an element ${{\bf v}}$ in the universal    Markov basis of
$A$ such that ${{\bf v}}^+\in\mathcal{F}_{{{\bf u}}}$. The {\em Markov
polyhedra} of $\mathcal{F}_{{{\bf u}}}$ are the convex hulls of the elements of the 
connected components  of   ${G}_{{\bf u}}$. When $\mathcal{L}$ is
positive, the  Markov polyhedra of $\mathcal F_{{\bf u}}$ are actually
polytopes. For ${{\bf u}}\in \mathcal{L}$ we let $P[{{\bf u}}]$ be
the convex hull of all elements of $G_{{\bf u}}$. When
${{\bf u}}\in \mathcal{M}$ the vertices of $P[{{\bf u}}]$ are vertices
of the Markov polyhedra of $\mathcal{F}_{{{\bf u}}}$ and we will
see that the vertices of the Markov polyhedra  are decisive
 in determining whether ${{\bf u}}$ belongs to the universal Gr{\" o}bner basis of $A$.  The criterion of
Theorem \ref{crit_min} is used in the proof of the next theorem.

\begin{Theorem}\label{comb_pure}
The universal Markov basis of $A$ is a subset of the Graver basis
of $A$ if and only if one of the following two conditions hold: i) $\mathcal{L}$ is positive or ii)
$\mathcal{L}=\mathcal{L}_{pure}$ and $\rank\mathcal{L}=1$. 
\end{Theorem}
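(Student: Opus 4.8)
The plan is to prove the two implications separately, drawing on the three results already available: Theorem~\ref{from_ctv}, the finiteness of $\mathcal{G}$, and the Markov criterion of Theorem~\ref{crit_min}. For the ``only if'' direction I would argue by contraposition. Assume $\mathcal{L}$ is neither positive nor pure of rank one. Non-positivity gives $\mathcal{L}_{pure}\neq 0$, hence $\rank\mathcal{L}_{pure}\geq 1$. If $\rank\mathcal{L}_{pure}>1$, then Theorem~\ref{from_ctv} applies directly. If $\rank\mathcal{L}_{pure}=1$, then the negation of ``$\mathcal{L}=\mathcal{L}_{pure}$ and $\rank\mathcal{L}=1$'' forces $\mathcal{L}\neq\mathcal{L}_{pure}$, since $\mathcal{L}=\mathcal{L}_{pure}$ would yield $\rank\mathcal{L}=1$; this again places us in a hypothesis of Theorem~\ref{from_ctv}. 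In both cases $\MM$ is infinite, whereas $\mathcal{G}$ is always finite, so $\MM\not\subseteq\mathcal{G}$.

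For the ``if'' direction, first suppose $\mathcal{L}$ is positive, and let $\mathbf{u}\in\MM$. Assuming $\mathbf{u}\notin\mathcal{G}$, write a nontrivial conformal decomposition $\mathbf{u}=\mathbf{v}+_c\mathbf{w}$ with $\mathbf{v},\mathbf{w}\in\mathcal{L}\setminus\{\mathbf{0}\}$. Positivity forces each of $\mathbf{v}^+,\mathbf{v}^-,\mathbf{w}^+,\mathbf{w}^-$ to be nonzero, because a nonzero element of $\mathcal{L}$ lying in $\NN^n$ or in $-\NN^n$ is impossible. I would then introduce the vertex $\mathbf{t}:=\mathbf{w}^++\mathbf{v}^-\in\NN^n$, which lies in $\mathcal{F}_{\mathbf{u}}$ because $\mathbf{u}^+-\mathbf{t}=\mathbf{v}\in\mathcal{L}$. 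Conformality gives $\mathbf{u}^+=\mathbf{v}^++\mathbf{w}^+$ and $\mathbf{u}^-=\mathbf{v}^-+\mathbf{w}^-$, so $\mathbf{u}^+$ and $\mathbf{t}$ share the (nonempty) support of $\mathbf{w}^+$, and $\mathbf{t}$ and $\mathbf{u}^-$ share that of $\mathbf{v}^-$. Hence $\mathbf{u}^+$ and $\mathbf{u}^-$ lie in one connected component of $G_{\mathbf{u}}$, contradicting $\mathbf{u}\in\MM$ via Theorem~\ref{crit_min}. Thus $\MM\subseteq\mathcal{G}$.

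It remains to treat $\mathcal{L}=\mathcal{L}_{pure}$ with $\rank\mathcal{L}=1$, which I would do by explicit computation. Here $\mathcal{L}=\ZZ\mathbf{g}$ for a primitive $\mathbf{g}$, and purity allows the choice $\mathbf{g}\in\NN^n$. Then $\mathcal{G}=\{\pm\mathbf{g}\}$: for $k\geq 2$ the element $k\mathbf{g}$ decomposes conformally as $\mathbf{g}+_c(k-1)\mathbf{g}$, while $\mathbf{g}$ admits no nontrivial conformal decomposition. On the Markov side, $\{\mathbf{g}\}$ is a Markov basis, since any two elements of a common fiber differ by a multiple of $\mathbf{g}$ and can be joined by steps of $\pm\mathbf{g}$ that remain in $\NN^n$ (as $\mathbf{g}\geq\mathbf{0}$); moreover no single $k\mathbf{g}$ with $|k|\geq 2$ connects $\mathbf{0}$ to $\mathbf{g}$. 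Hence the unique minimal Markov basis of least cardinality is $\{\mathbf{g}\}$, giving $\MM=\{\mathbf{g}\}\subseteq\mathcal{G}$.

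The step I expect to be most delicate is the positive case, where the connectivity of $G_{\mathbf{u}}$ rests on $\mathbf{w}^+$ and $\mathbf{v}^-$ being nonzero; this is precisely where positivity is indispensable. The same point explains the dichotomy in the statement: in the pure rank-one case one has $\mathbf{g}^-=\mathbf{0}$, so the path argument through $\mathbf{t}$ collapses, and the conclusion must instead come from the explicit descriptions of $\mathcal{G}$ and $\MM$ given above.
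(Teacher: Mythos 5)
Your proof is correct and follows essentially the same route as the paper's: the ``only if'' direction via Theorem~\ref{from_ctv} and finiteness of the Graver basis, and the positive case via the same intermediate fiber element $\mathbf{w}^++\mathbf{v}^-$ together with Theorem~\ref{crit_min}. The only difference is that you spell out in detail the pure rank-one case ($\mathcal{G}=\{\pm\mathbf{g}\}$ and $\MM=\{\mathbf{g}\}$), which the paper dismisses as immediate; this is a welcome elaboration, not a different argument.
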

\begin{proof} Assume first that $\MM \subset\mathcal{G} $.  Theorem~\ref{from_ctv} says
that if $\rank(\mathcal{L}_{pure })>1$ or $\rank(\mathcal{L}_{pure })=1$ and $\mathcal{L}\neq \mathcal{L}_{pure}$
then the universal Markov basis ${\MM}$ of $A$ is infinite. Since the Graver basis of $A$ is  finite the desired conclusion follows.
For the converse, assume first that $\mathcal{L}$ is non positive. By Theorem~\ref{from_ctv} we are in  the case where $\rank(\mathcal{L}_{pure})=1$ and $\mathcal{L}=\mathcal{L}_{pure}$. We let  ${\bf 0}\neq {{{\bf w}}}\in\NN^n$ be such that $\mathcal{L}=\langle {{{\bf w}}}\rangle$. It is immediate that ${{{\bf w}}}\in \mathcal{G} $ and thus ${\MM} = \mathcal{G} $. Next we examine the case where  $\mathcal{L}$ is positive. We will show that if ${{\bf u}}\in \mathcal{L}$, ${{\bf u}}\notin \mathcal{G} $ then ${{\bf u}} \notin{\MM} $. Since ${{\bf u}} \notin \mathcal{G} $ there
exist nonzero vectors ${{\bf v},{{\bf w}}}\in\mathcal{L}$ such that ${{\bf u}}={{\bf v}}+_c {{{\bf w}}}$. Thus
${\bf u}^+={\bf v}^++ {{\bf w}}^+$ and ${\bf u}^-={\bf v}^-+ {{\bf w}}^-$.
It follows that ${\bf u}^+$, ${\bf u}^-$ and ${\bf u}^+-{\bf v}={{\bf w}}^++{\bf v}^-$ are all in
  $ \mathcal F_{{\bf u}}$.  Next we show that
 ${\bf v}^+$ is nonzero. Indeed suppose not. Since   ${{\bf v}}^- = {{\bf v}} -{{\bf v}}^+={{\bf v}} \in \mathcal{L}\cap\NN^n=\{\bf 0\}$,
it follows that  ${\bf v}=\bf 0$, a contradiction. Similarly ${{\bf w}}^+, {\bf v}^-, {{\bf w}}^-$ are nonzero.
Thus  in the fiber  $ \mathcal F_{{\bf u}}$, the elements ${\bf u}^+$,  ${{\bf w}}^+ +{\bf v}^-$ are connected by an edge because $({{\bf u}}^+-({{{\bf w}}}^++{{\bf v}}^-))^+={{\bf v}}^+$, which is smaller than ${{\bf u}}^+$. Similarly
 ${\bf u}^-$, ${{\bf w}}^++{\bf v}^-$  are connected by an edge because $(({{{\bf w}}}^++{{\bf v}}^-)-{{\bf u}}^-)^+={{{\bf w}}}^+$, which is smaller than ${{{\bf w}}}^++{{\bf v}}^-$.
It follows that ${\bf u}^+$, ${\bf u}^-$
belong to the same connected component of $G_{{\bf u}}$ and thus ${{\bf u}}\notin \MM $.
\end{proof}

\begin{Remark}{\rm Let ${\ME} $  be the  union of all minimal Markov bases of
$A$, not necessarily of minimal cardinality,  where we identify a vector ${\bf  u}$ with $-{{\bf u}}$.  Note
that ${\MM} \subset {\ME} $.
Next we show that $\ME$ is a subset of $\mathcal{G} $ if and only if $\mathcal{L}$ is   a positive lattice. 
Indeed when $\mathcal{L}$ is   a positive lattice then $ {\ME}=\MM\subset \mathcal{G} $ as pointed out in the introduction (see also \cite{CTV}). 
Suppose now that 
  $\mathcal{L}$ is a non positive lattice and consider the only case left unanswered by Theorem~\ref{comb_pure}: namely consider the case 
where $\mathcal{L}=\mathcal{L}_{pure}$ of rank $1$. This means that $\mathcal{L}=\langle {{{\bf w}}}\rangle$ for some ${\bf 0}\neq {{{\bf w}}}\in\NN^n$. It is easy to see that if  $k,l\ge 2$ are any two relatively prime integers 
then
$\{k{{{\bf w}}},l{{{\bf w}}}\}$ is a minimal Markov basis of $A$. 
Thus $k{{{\bf w}}}\in {\ME}$ for every $k$ and therefore ${\ME}$ is infinite and  cannot be a subset 
of  the Graver
basis of $A$, which is equal to $\{{\bf w}\}$. }\end{Remark}

Next we examine the relation between the universal Markov basis of $A$ 
and the universal Gr{\" o}bner basis of $A$.   We recall the following characterization  given  in \cite{St}.

\begin{Theorem}\label{univ_grobner}\cite[Theorem 7.8]{St} If $\mathcal{L}$ is positive and ${{\bf u}}\in \mathcal{L}$
   then ${{\bf u}}$ is in the
universal Gr{\" o}bner basis of $A$ if and only if the greatest common divisor of the coordinates of
  ${{\bf u}}$ is one and the line segment $[{{\bf u}}^+,{{\bf u}}^-]$ is an edge of the polytope $P[{{\bf u}}]$.
\end{Theorem}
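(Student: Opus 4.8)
The plan is to translate the whole statement into the dictionary between term orders and generic weight vectors, and then into the geometry of the fiber polytopes $P[{\bf u}]$. For a generic $\omega\in\RR^n$ the induced term order $\prec_\omega$ selects, inside each fiber $\mathcal F_{\bf t}$, the unique $\omega$-minimal lattice point as its standard monomial, so $\ini_{\prec_\omega}(I_{\mathcal L})$ is generated by the monomials $x^{\bf a}$ with ${\bf a}$ not $\omega$-minimal in its fiber, and the reduced Gr\"obner basis consists of the binomials $x^{\bf c}-x^{N({\bf c})}$ with $x^{\bf c}$ a minimal generator of $\ini_{\prec_\omega}(I_{\mathcal L})$ and $N({\bf c})$ its normal form. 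Since ${\bf u}$ and $-{\bf u}$ are identified, it suffices to prove that $x^{{\bf u}^+}-x^{{\bf u}^-}$ lies in the reduced Gr\"obner basis of some generic $\omega$ with leading term $x^{{\bf u}^+}$ if and only if $\gcd$ of the coordinates of ${\bf u}$ is one and $[{\bf u}^+,{\bf u}^-]$ is an edge of $P[{\bf u}]$. In Gr\"obner terms the right-hand conditions unwind to (A) ${\bf u}^-$ is the $\omega$-minimal point of $\mathcal F_{\bf u}$, and (B) $x^{{\bf u}^+}$ is a minimal generator of $\ini_\omega(I_{\mathcal L})$. Throughout I would use one elementary observation: if ${\bf c}'\le{\bf u}^+$ and ${\bf w}\in\mathcal L$ satisfy ${\bf c}'-{\bf w}\ge{\bf 0}$, then ${\bf u}^+-{\bf w}=({\bf u}^+-{\bf c}')+({\bf c}'-{\bf w})\ge{\bf 0}$ lies again in $\mathcal F_{\bf u}$, so any lattice move that lowers a sub-monomial of $x^{{\bf u}^+}$ also lowers ${\bf u}^+$ itself inside $\mathcal F_{\bf u}$.

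For the sufficiency I would start from an edge and choose a functional $\psi$ whose minimum over $P[{\bf u}]$ is attained exactly along the segment $[{\bf u}^+,{\bf u}^-]$. Because $\gcd$ of the coordinates of ${\bf u}$ is one, the vector ${\bf u}={\bf u}^+-{\bf u}^-$ is primitive, so the segment contains no lattice point besides its endpoints and hence ${\bf u}^+,{\bf u}^-$ are the only points of $\mathcal F_{\bf u}$ on that minimal face. A small generic perturbation $\tilde\omega$ of $\psi$ then makes ${\bf u}^-$ the strict minimum, giving (A), while leaving every other fiber point strictly above both endpoints. To verify (B), suppose a proper divisor $x^{{\bf c}'}$, ${\bf c}'\lneq{\bf u}^+$, lay in $\ini_{\tilde\omega}(I_{\mathcal L})$; then there is a lattice move ${\bf w}$ with $\tilde\omega\cdot{\bf w}>0$ and ${\bf c}'-{\bf w}\ge{\bf 0}$, and by the observation ${\bf u}^+-{\bf w}\in\mathcal F_{\bf u}$ with $\psi$-value at most that of ${\bf u}^+$ (taking the perturbation small enough that $\tilde\omega\cdot{\bf w}>0$ forces $\psi\cdot{\bf w}\ge0$). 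Since $\psi\cdot{\bf u}^+$ is the minimum of $\psi$ over $P[{\bf u}]$, the point ${\bf u}^+-{\bf w}$ must also realize that minimum and thus lie on the segment, i.e.\ equal ${\bf u}^+$ or ${\bf u}^-$; the disjointness of the supports of ${\bf u}^+$ and ${\bf u}^-$ together with ${\bf c}'\lneq{\bf u}^+$ excludes both, a contradiction. Hence $x^{{\bf u}^+}$ is a minimal generator and $x^{{\bf u}^+}-x^{{\bf u}^-}$ belongs to the reduced Gr\"obner basis of $\tilde\omega$.

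For the necessity, assume $x^{{\bf u}^+}-x^{{\bf u}^-}$ sits in the reduced Gr\"obner basis of some $\omega$ with leading term $x^{{\bf u}^+}$. If $\gcd$ were $d\ge2$, write ${\bf u}=d{\bf u}'$; saturatedness of $\mathcal L=\Ker_{\ZZ}(A)$ gives ${\bf u}'\in\mathcal L$, and $x^{{\bf u}'^+}$ is a proper divisor of $x^{{\bf u}^+}$ lying in $\ini_\omega(I_{\mathcal L})$, since $\omega\cdot{\bf u}'=\tfrac1d\,\omega\cdot{\bf u}>0$, contradicting minimality; so $\gcd$ is one. For the edge I would argue contrapositively: if $[{\bf u}^+,{\bf u}^-]$ is not a face of $P[{\bf u}]$ then, ${\bf u}^-$ being the $\omega$-minimal vertex, the direction ${\bf u}={\bf u}^+-{\bf u}^-$ fails to be an extreme ray of the vertex cone of $P[{\bf u}]$ at ${\bf u}^-$, so there is a conical dependence ${\bf u}=\sum_k\mu_k({\bf x}_k-{\bf u}^-)$ with $\mu_k>0$ and the ${\bf x}_k\in\mathcal F_{\bf u}$ not proportional to ${\bf u}$. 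This dependence must then be converted into a single lattice vector ${\bf w}\neq{\bf u}$ with $\omega\cdot{\bf w}>0$ and ${\bf w}^+\le{\bf u}^+$, which produces a proper divisor of $x^{{\bf u}^+}$ in $\ini_\omega(I_{\mathcal L})$, again contradicting that $x^{{\bf u}^+}$ is a minimal generator. Hence $[{\bf u}^+,{\bf u}^-]$ is an edge.

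The main obstacle is precisely this last step in the necessity: extracting, from the mere failure of the segment to be a face, an honest proper monomial divisor in the initial ideal. This is where a linear-programming (Farkas) argument is required to pass from the convex-geometric dependence at the vertex ${\bf u}^-$ to a single lattice vector ${\bf w}$ with the required componentwise and sign conditions. A secondary delicate point, already visible in the sufficiency argument, is that an edge of a lattice polytope may carry interior lattice points; the $\gcd$ condition is exactly what rules these out and is indispensable, since for $\gcd\ge2$ the sub-multiple ${\bf u}'$ always supplies a proper divisor and so expels $x^{{\bf u}^+}-x^{{\bf u}^-}$ from every reduced Gr\"obner basis.
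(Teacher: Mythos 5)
First, a point of reference: the paper does not prove this statement at all --- it is quoted directly from Sturmfels \cite[Theorem 7.8]{St} --- so your attempt must be measured against the known proof, whose entire difficulty sits in the ``only if'' direction. Your sufficiency argument (edge plus gcd implies membership in some reduced Gr\"obner basis) is essentially correct and is the standard one: support the edge by a functional $\psi$, perturb to a generic $\tilde\omega$, chosen small relative to the finite fiber $\mathcal{F}_{{\bf u}}$, and use the observation that a lattice move lowering a proper divisor of $x^{{\bf u}^+}$ also lowers ${\bf u}^+$ inside its own fiber; the gcd hypothesis correctly rules out interior lattice points of the edge. (Two small repairs: the case ${\bf u}^+-{\bf w}={\bf u}^+$ is excluded by $\tilde\omega\cdot{\bf w}>0$, i.e.\ ${\bf w}\neq{\bf 0}$, not by disjointness of supports; and the passage from generic weight vectors to honest term orders needs the positivity of $\mathcal{L}$, via a strictly positive vector in $\mathcal{L}^{\perp}$, which you use tacitly.) The gcd half of the necessity, via saturation of $\Ker_{\ZZ}(A)$, is also fine.

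The genuine gap is exactly where you flag an ``obstacle'': the edge half of the necessity. You reduce it to the claim that a conical dependence ${\bf u}=\sum_k\mu_k({\bf x}_k-{\bf u}^-)$ at the vertex ${\bf u}^-$ can be ``converted into a single lattice vector ${\bf w}\neq{\bf u}$ with $\omega\cdot{\bf w}>0$ and ${\bf w}^+\le{\bf u}^+$'', and you defer this conversion to ``a linear-programming (Farkas) argument''. That conversion \emph{is} the theorem; it is not a routine Farkas application. Farkas duality only yields rational certificates, i.e.\ relations $\alpha{\bf u}^++\beta{\bf u}^-=\sum_k\lambda_k{\bf x}_k$ among fiber points, whereas what you need is an \emph{integral} vector with componentwise sign constraints (${\bf w}^+\lneq{\bf u}^+$) and positive $\omega$-weight. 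If one actually tries to extract ${\bf w}$ from such a dependence, one finds that the reduced-Gr\"obner-basis conditions --- ${\bf u}^-$ is the unique $\omega$-minimum of $\mathcal{F}_{{\bf u}}$, and every fiber point ${\bf t}\neq{\bf u}^+$ whose support meets $\supp({\bf u}^+)$ satisfies $\omega\cdot{\bf t}>\omega\cdot{\bf u}^+$ --- are perfectly consistent with the inequalities the dependence produces: splitting the ${\bf x}_k$ according to whether their support meets $\supp({\bf u}^+)$ and comparing $\omega$-values yields only $\alpha>\sum_{k\,:\,\supp({\bf x}_k)\cap\supp({\bf u}^+)\neq\emptyset}\lambda_k$, which is no contradiction. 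A correct proof needs a further idea genuinely exploiting integrality and the fiber structure (for instance, reflections ${\bf u}^++{\bf u}^--{\bf x}_k$ of fiber points through the midpoint of the segment, exchange arguments on the multiset identity $P{\bf u}^++Q{\bf u}^-=\sum_k n_k{\bf x}_k$, or the Lawrence-lifting machinery Sturmfels develops in Chapter 7 before stating the theorem). So both easy parts of the statement are in place, but the hard direction is asserted, not proved.
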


For ${{\bf u}}\in \RR^n$, we let
$\supp ({{\bf u}}):=\{ i:\ u_i\neq0\}$. For
$X\subset \RR^n$,  we let
\[ \supp(X):=\bigcup_{{{\bf u}}\in X}\supp ({{\bf u}})\ .\]
We note that if ${{\bf u}} \in \mathcal{M}$ and $\mathcal{L}$ is
positive then it is not hard to prove that the supports of
different connected components of $  G_{{{\bf u}}}$ are disjoint. Hence the Markov polytopes of $\mathcal F_{{\bf u}}$ are disjoint.

\begin{Lemma}\label{markov_groebner_positive} Let $\mathcal{L}$ be
positive. An element   ${\bf u}$  of the universal Markov basis of $A$
belongs to the universal Gr{\" o}bner basis of $A$ if and only if
${{\bf u}}^+$ and ${{\bf u}}^-$ are vertices of two different Markov polytopes.
\end{Lemma}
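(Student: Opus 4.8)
The plan is to deduce the equivalence from the characterization of the universal Gr\"obner basis $\mathcal U$ in Theorem~\ref{univ_grobner}, which for positive $\mathcal L$ says that ${\bf u}\in\mathcal U$ precisely when the coordinates of ${\bf u}$ have greatest common divisor $1$ and $[{\bf u}^+,{\bf u}^-]$ is an edge of $P[{\bf u}]$. First I would record that the gcd condition is automatic once ${\bf u}\in\MM$: if ${\bf u}=d{\bf v}$ with ${\bf v}\in\mathcal L$ and $d\ge 2$, then the vectors $(d-k){\bf v}^+ +k{\bf v}^-$ for $0\le k\le d$ all lie in $\mathcal F_{\bf u}$ and consecutive ones share support (each contains $\supp({\bf v}^+)$ or $\supp({\bf v}^-)$, both nonempty since $\mathcal L$ is positive), so ${\bf u}^+$ and ${\bf u}^-$ lie in the same connected component of $G_{\bf u}$ and Theorem~\ref{crit_min} forces ${\bf u}\notin\MM$. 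Hence for ${\bf u}\in\MM$ membership in $\mathcal U$ is equivalent to $[{\bf u}^+,{\bf u}^-]$ being an edge of $P[{\bf u}]$. Moreover, since ${\bf u}\in\MM$, Theorem~\ref{crit_min} already places ${\bf u}^+$ and ${\bf u}^-$ in different connected components, hence in different Markov polytopes; so the real content is that $[{\bf u}^+,{\bf u}^-]$ is an edge of $P[{\bf u}]$ if and only if ${\bf u}^+$ and ${\bf u}^-$ are vertices of their respective Markov polytopes.

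The forward implication is immediate: the endpoints of an edge are vertices (extreme points) of $P[{\bf u}]$, and an extreme point of $P[{\bf u}]$ lying in a sub-polytope, here a Markov polytope $Q\subseteq P[{\bf u}]$, is automatically a vertex of $Q$. For the converse I would exploit two structural facts. Since $\mathcal L$ is positive there is a strictly positive weight $\omega\in\RR^n$ with $\omega\cdot{\bf v}=0$ for all ${\bf v}\in\mathcal L$; consequently every element of $\mathcal F_{\bf u}$ has the same $\omega$-degree $c=\omega\cdot{\bf u}^+>0$, so $P[{\bf u}]$ lies in the hyperplane $\{\omega\cdot{\bf x}=c\}$. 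Next, as observed just before the statement, for ${\bf u}\in\MM$ the supports of the connected components are pairwise disjoint, so the Markov polytopes $Q_1,Q_2,Q_3,\dots$ live in mutually orthogonal coordinate subspaces $\RR^{S_j}$ with $S_j=\supp(Q_j)$; in particular $\supp({\bf u}^+)\subseteq S_1$ and $\supp({\bf u}^-)\subseteq S_2$ are disjoint from every other $S_j$.

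For the converse itself I would construct an explicit supporting functional cutting out exactly $[{\bf u}^+,{\bf u}^-]$. Using that ${\bf u}^+$ is a vertex of $Q_1$, choose $a$ supported on $S_1$ with $a\cdot{\bf u}^+=0$ and $a\cdot{\bf w}<0$ for every other vertex ${\bf w}$ of $Q_1$ (this is achieved after subtracting a suitable multiple of $\omega|_{S_1}$, the functional that takes the constant value $c$ on $Q_1$); symmetrically choose $b$ supported on $S_2$ for ${\bf u}^-$ in $Q_2$. Then set $\phi=a+b-\sum_{j\ge 3}\omega|_{S_j}$. Disjointness of supports makes $\phi$ vanish on both ${\bf u}^+$ and ${\bf u}^-$, hence on the whole segment, while $\phi({\bf w})=a\cdot{\bf w}<0$ on the remaining vertices of $Q_1$, $\phi({\bf w})=b\cdot{\bf w}<0$ on those of $Q_2$, and $\phi({\bf w})=-\omega\cdot{\bf w}=-c<0$ on every vertex of $Q_j$ with $j\ge 3$. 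Since the vertices of $P[{\bf u}]$ are among the vertices of the $Q_j$, the face $\{\phi=0\}\cap P[{\bf u}]$ equals $\con\{{\bf u}^+,{\bf u}^-\}=[{\bf u}^+,{\bf u}^-]$, an edge. The main obstacle is exactly the handling of the polytopes $Q_j$ with $j\ge 3$: the naive functional $a+b$ already isolates the segment among the vertices of $Q_1$ and $Q_2$, but by disjointness of supports it also vanishes on all other Markov polytopes, exposing the far-too-large face $\con({\bf u}^+,{\bf u}^-,Q_3,Q_4,\dots)$. Correcting this by subtracting $\omega|_{S_j}$ on each remaining support is the step that makes the argument succeed, and it is what genuinely uses the positivity of $\mathcal L$.
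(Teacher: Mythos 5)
Your proposal is correct and takes essentially the same route as the paper: the forward direction via Theorem~\ref{crit_min} together with the fact that endpoints of an edge of $P[{\bf u}]$ are vertices, and the converse by gluing vertex-exposing functionals on the two support-disjoint Markov polytopes into one global functional that exposes $[{\bf u}^+,{\bf u}^-]$ as an edge; the only (cosmetic) difference is that the paper neutralizes the remaining Markov polytopes by assigning weight $1$ to the leftover coordinates and using $P[{\bf u}]\subset\RR_+^n$, while you subtract $\omega|_{S_j}$ on each remaining support. Your explicit check that the gcd-one hypothesis of Theorem~\ref{univ_grobner} is automatic for elements of the universal Markov basis is a detail the paper leaves implicit, and it is correct.
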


\begin{proof}  Suppose that ${\bf u}\in \mathcal{U}$. Since
${\bf u}\in \MM$, it follows by Theorem \ref{crit_min} that ${{\bf u}}^+$
and ${{\bf u}}^-$ are elements  of two different Markov polytopes of
$F_{{\bf u}}$.    Since $[{{\bf u}}^+,{{\bf u}}^-]$ is an edge of
$P[{{\bf u}}]$ it follows that
 ${{\bf u}}^+$, ${{\bf u}}^-$  are vertices of $P[{{\bf u}}] $
 and thus of their   Markov polytopes as well.

For the converse assume ${{\bf u}}^+$, ${{\bf u}}^-$  are vertices of
the disjoint Markov polytopes   $P_1,P_2$ respectively. Since
${{\bf u}}^+,{{\bf u}}^-$ are vertices of $P_1,P_2$ we can find
vectors ${\bf c}_1,{\bf c}_2$ such that $\supp({\bf
c}_i)\subset\supp(P_i)$ for $i=1,2$  with the property that ${\bf
c}_1\cdot {{\bf u}}^+=0$, ${\bf c}_1\cdot {{\bf v}}>0$  for all ${\bf
v}\in P_1\setminus\{{{\bf u}}^+\}$ and ${\bf c}_2\cdot {{\bf u}}^-=0$,
${\bf c}_2\cdot {{\bf v}}>0$  for all ${{\bf v}}\in P_2\setminus\{{\bf
u}^-\}$. We define ${\bf c}$  as follows
\[
{\bf c}_i=\left\{
\begin{array}{ll}
({\bf c}_1)_i, &\text{if $i\in\supp({\bf c}_1)$},\\ ({\bf c}_2)_i, & \text{if $i\in\supp({\bf c}_2)$},\\ 1, & \text{otherwise}.\\
\end{array}
\right.
\]
 From the definition of ${\bf c}$
it follows  that
${\bf c}\cdot{{\bf v}}=0$ for all ${{\bf v}}\in[{\bf{u}^+,{{\bf u}}^-}]$
and ${\bf c}\cdot{{\bf v}}>0$ for all
${{\bf v}}\in\con(P_1\cup P_2)\setminus\{[{\bf{u}^+,{{\bf u}}^-}]\}$.
On the other hand ${\bf c}\cdot {{\bf v}}>0$ for all
${{\bf v}}\notin\con(P_1\cup P_2)$, since $P[{{\bf u}}] \subset\RR_+^{n}$.
Therefore   $[{\bf{u}^+,{{\bf u}}^-}]$ is an edge of $P[{{\bf u}}] $  and by
Theorem~\ref{univ_grobner} we have ${{\bf u}}\in\mathcal U $, as desired.
\end{proof}

Note that if $\mathcal{L}$ is non positive and   
$\rank\mathcal L_{pure}>1$, then  $\MM$ is infinite by Theorem \ref{from_ctv} and thus $\MM\not\subset  \mathcal{U}$.  
On the other hand
 if
$\mathcal{L}= \mathcal{L}_{pure}$ and $\rank \mathcal L=1$    then
$\MM=\mathcal{U}=  \{{{{\bf w}}}\}$ where
${{{\bf w}}}$ is the generator of $\mathcal L$.
The proof of the next theorem follows immediately by these remarks
 and Lemma
\ref{markov_groebner_positive}.

\begin{Theorem}\label{Grobner}
Let $A$ be an arbitrary integer matrix.
The universal Markov basis of $A$ is a subset of the universal
Gr{\" o}bner basis of $A$ if and only if one of the following
two conditions holds
\begin{enumerate}
\item{} $\mathcal{L}$  is positive and every element of a Markov fiber is a vertex of a Markov polytope,
\item{} $\mathcal{L}=\mathcal{L}_{pure}$  and $\rank\mathcal{L}=1$.
\end{enumerate}
\end{Theorem}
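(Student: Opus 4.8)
The plan is to reduce Theorem~\ref{Grobner} to the three cases governed by the rank of $\mathcal{L}_{pure}$ and then dispatch each using results already in hand. First I would observe that the statement claims $\MM\subset\mathcal U$ holds exactly under condition (1) or (2), so I must show both the forward and backward implications, organized by whether $\mathcal{L}$ is positive or non positive. The non positive case is handled entirely by the remarks preceding Theorem~\ref{univ_grobner}: if $\rank\mathcal L_{pure}>1$, or $\rank\mathcal L_{pure}=1$ with $\mathcal L\neq\mathcal L_{pure}$, then Theorem~\ref{from_ctv} makes $\MM$ infinite while $\mathcal U\subset\mathcal G$ is finite, so $\MM\not\subset\mathcal U$; the only surviving non positive possibility is $\mathcal L=\mathcal L_{pure}$ with $\rank\mathcal L=1$, which is exactly condition (2), and there $\MM=\mathcal U=\{{\bf w}\}$. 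Thus in the non positive regime $\MM\subset\mathcal U$ iff (2) holds, and (1) never applies.

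Next I would treat the positive case, which is where Lemma~\ref{markov_groebner_positive} does the work. The key equivalence I want is: $\MM\subset\mathcal U$ iff every element of a Markov fiber is a vertex of a Markov polytope. For the backward direction, assume every element of every Markov fiber is a vertex of its Markov polytope, and take ${\bf u}\in\MM$. By Theorem~\ref{crit_min} the vectors ${\bf u}^+$ and ${\bf u}^-$ lie in different connected components of $G_{\bf u}$, hence in two different Markov polytopes, and by hypothesis each is a vertex of its respective polytope; Lemma~\ref{markov_groebner_positive} then yields ${\bf u}\in\mathcal U$. For the forward direction I would argue contrapositively: suppose some Markov fiber $\mathcal F_{\bf u}$ (witnessed by ${\bf v}\in\MM$ with ${\bf v}^+\in\mathcal F_{\bf u}=\mathcal F_{\bf v}$) contains an element ${\bf t}$ that is not a vertex of its Markov polytope. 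The natural move is to produce an element ${\bf u}\in\MM$ whose positive or negative part is exactly this non-vertex ${\bf t}$, so that Lemma~\ref{markov_groebner_positive} forces ${\bf u}\notin\mathcal U$; here I would use that ${\bf v}^+,{\bf v}^-$ are themselves vertices of their Markov polytopes (since ${\bf v}\in\MM\cap\mathcal U$ would already settle things, the interesting case is when the offending non-vertex is some other fiber element), and exploit that the non-vertex ${\bf t}$ differs from a genuine Markov basis element by a conformal-type adjustment that keeps it in the same fiber.

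Combining the two regimes gives the theorem: in the positive case $\MM\subset\mathcal U$ iff (1), and in the non positive case $\MM\subset\mathcal U$ iff (2), so overall $\MM\subset\mathcal U$ iff (1) or (2). Since the paper explicitly says the proof ``follows immediately by the remarks preceding Theorem~\ref{univ_grobner} and Lemma~\ref{markov_groebner_positive},'' I expect the intended argument to be exactly this assembly, with almost all substance pushed into the lemma and the dichotomy remarks.

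The hard part, to the extent there is one, is the forward direction in the positive case: turning a fiber element that \emph{fails} to be a vertex of its Markov polytope into a concrete universal Markov basis vector ${\bf u}$ with ${\bf u}^\pm$ equal to that non-vertex. The subtlety is that a Markov fiber is defined via the \emph{existence} of some ${\bf v}\in\MM$ landing in it, and one must check that the non-vertex can be realized as the positive (or negative) part of an honest element of $\MM$, not merely of $\mathcal L$. I would handle this by invoking Theorem~\ref{crit_min} in reverse—constructing ${\bf u}$ so that ${\bf u}^+$ is the non-vertex and ${\bf u}^-$ sits in a different component of $G_{\bf u}$—and then verifying via Lemma~\ref{markov_groebner_positive} that $[{\bf u}^+,{\bf u}^-]$ cannot be an edge of $P[{\bf u}]$ precisely because ${\bf u}^+$ is not a vertex of its polytope, contradicting Theorem~\ref{univ_grobner}.
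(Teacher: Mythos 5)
Your proposal is correct and takes essentially the same route as the paper, whose proof is precisely the assembly you describe: the non positive regime via Theorem~\ref{from_ctv} and the remarks preceding Theorem~\ref{univ_grobner}, and the positive regime via Theorem~\ref{crit_min} together with Lemma~\ref{markov_groebner_positive}. The one detail you leave implicit in the forward direction—that a non-vertex ${\bf t}$ and any ${\bf s}$ in a different connected component of $G_{\bf u}$ yield an element ${\bf u}={\bf t}-{\bf s}$ of $\MM$ with $({\bf t}-{\bf s})^+={\bf t}$—is automatic, since distinct connected components have disjoint supports, so your construction goes through.
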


We finish this section with an example which shows specific 
elements of $\MM$ not in $\mathcal{U}$.

\begin{Example}\label{gr_univ_mark}{\rm  Let \[
A=
 \left( \begin{array}{cccccccc}
2 & 2 & 2 & 2 & 3 & 3 & 3 & 3\\
4 & 0 & 4 & 0 & 3 & 3 & 3 & 3\\
4 & 0 & 0 & 4 & 3 & 3 &  3 & 3\\
2 & 2 & 2 & 2 & 6 & 0 & 6 & 0\\
2 & 2 & 2 & 2 & 6 & 0 & 0 & 6 \\
\end{array} \right). \]
Then  $\mathcal L$ is a positive lattice. One can prove that $\{{\bf
 u,v,w}\}$ is a  minimal Markov basis of $A$ where ${\bf
 u}=(1,1,-1,-1,0,0,0,0)$, ${{\bf v}}=(0,0,0,0,1,1,-1,-1)$ and
 ${{{\bf w}}}=
  (2,2,1,1,-1,-1,-1,-1)$.  The corresponding    Markov fibers
  are
$\mathcal F_{{\bf u}}=\{{{\bf u}}^+,{{\bf u}}^-\}$ , $\mathcal F_{\bf
v}=\{{{\bf v}}^+,{{\bf v}}^-\}$ and   $\mathcal F_{{{\bf w}}}=\{3{\bf
u}^+,{{{\bf w}}}^+,{{\bf u}}^++2{{\bf u}}^-,3{{\bf u}}^-,2{{\bf v}}^+,$ ${\bf
w}^-,2{{\bf v}}^-\}$. The  Markov polytopes of $\mathcal F_{{\bf u}}$
and $\mathcal{F}_{{\bf v}}$  are zero dimensional, they consist of
points. The Markov polytopes of   $\mathcal F_{{{\bf w}}}$ are one
dimensional: they are the line segments $\con(3{{\bf u}}^+,3{\bf
u}^-)$ and  $\con(2{{\bf v}}^+,2{{\bf v}}^-)$. We note that ${\bf
w}^+=2{{\bf u}}^++{{\bf u}}^-$ is in $\con(3{{\bf u}}^+,3{{\bf u}}^-)$,  ${{{\bf w}}}^-={{\bf v}}^++{{\bf v}}^-$ is in $\con(2{{\bf v}}^+,2{{\bf v}}^-)$ but they are not vertices. Thus ${{{\bf w}}}$ is not in the universal Gr{\" o}bner basis
of $A$. Note that $A$ has 12 different minimal Markov bases. Of
those bases, exactly $4$ are subsets of $\mathcal U $. Moreover
$|\MM |=14$  and $|\MM \cap \mathcal U |=6$. Moreover,
computing with 4ti2\cite{4ti2}, we get  that $\mathcal
G =\MM $ and  $|\mathcal U|=6$.}
 \end{Example}

\section{Generalized Lawrence liftings }\label{complexity}

Let $\mathcal{L}\subset \ZZ^n$ be a lattice.    We say that ${\bf
0}\neq {{\bf u}}$ is $\mathcal{L}$--{primitive} if $\QQ {{\bf u}}\cap
\mathcal{L} =\ZZ {{\bf u}}$.  Suppose that $\mathcal{L}$ is non
positive.  In \cite{CTV} it was shown that there exists an
$\mathcal{L} $--{\it primitive} element ${{\bf u}} \in\mathcal{L}
\cap\NN^n$ such that $\supp ({{\bf u}}) =\supp
\;\mathcal{L}_{pure}$, \cite[Proposition 2.7, Proposition
2.10]{CTV}. If  $\mathcal{L}=\mathcal{L}(A)$ then this element can
be extended to a minimal basis of $\mathcal{L}_{pure}$ and then to
a minimal Markov basis of $A$ of minimal cardinality by
\cite[Theorem 2.12, Theorem 4.1, Theorem 4.11]{CTV}. This is the
point of the next lemma.

\begin{Lemma}\label{in_basis_lem} If $\mathcal{L}$ is non positive,
there exists an $\mathcal{L}$--primitive element ${{\bf v}}\in \NN^n$  such
that ${{\bf v}}$ is in the universal Markov basis of $A$ and
$\supp({{\bf v}})=\supp\;{\mathcal{L}_{pure}}$.
\end{Lemma}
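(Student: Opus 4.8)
The plan is to produce the desired vector directly, by taking the distinguished primitive element already constructed in \cite{CTV} and then tracking it through the formation of a minimal Markov basis of minimal cardinality, invoking the structural theory of \cite{CTV} at each stage. First I would apply \cite[Proposition 2.7, Proposition 2.10]{CTV}: since $\mathcal{L}$ is non positive we have $\mathcal{L}_{pure}\neq\{{\bf 0}\}$, and these results furnish an $\mathcal{L}$-primitive vector ${\bf v}\in\mathcal{L}\cap\NN^n$ with $\supp({\bf v})=\supp\,\mathcal{L}_{pure}$. This ${\bf v}$ is the candidate demanded by the lemma: it already lies in $\NN^n$ and has the correct support, so the only remaining task is to place it inside $\MM(A)$. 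I would also record that $\mathcal{L}$-primitivity forces $\QQ{\bf v}\cap\mathcal{L}_{pure}=\ZZ{\bf v}$, since $\mathcal{L}_{pure}\subseteq\mathcal{L}$ and ${\bf v}\in\mathcal{L}_{pure}$; thus ${\bf v}$ is in particular a primitive vector of the free abelian group $\mathcal{L}_{pure}$.

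The next two steps rely on the remaining cited results. Because ${\bf v}$ is primitive in $\mathcal{L}_{pure}$, I would extend it to a minimal basis of the pure sublattice, which is permitted by \cite[Theorem 2.12]{CTV}. I would then invoke \cite[Theorem 4.1, Theorem 4.11]{CTV}, which describe how a minimal basis of $\mathcal{L}_{pure}$ is completed, by adjoining suitable non-pure elements, to a minimal Markov basis of $A$ of minimal cardinality. Since ${\bf v}$ is then a member of a minimal Markov basis of minimal cardinality, the definition of $\MM(A)$ as the union (up to the identification of ${\bf u}$ with $-{\bf u}$) of all such bases yields ${\bf v}\in\MM(A)$, and the support condition holds by construction, completing the argument.

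The hard part is entirely concentrated in the appeal to \cite[Theorem 4.1, Theorem 4.11]{CTV}: one must know that a minimal basis of $\mathcal{L}_{pure}$ containing ${\bf v}$ extends to a Markov basis whose cardinality is \emph{genuinely minimal}, not merely to some minimal Markov basis of larger cardinality, so that ${\bf v}$ survives into the universal Markov basis rather than being eliminated. In the positive case the graded Nakayama lemma trivializes this point, but for non positive $\mathcal{L}$ it is exactly the content of the structural theory of \cite{CTV}, and the correctness of the step rests on their characterization of minimal-cardinality Markov bases through the pure sublattice.
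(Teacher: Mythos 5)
Your proposal is correct and follows exactly the paper's own argument: the paper proves the lemma by the same chain of citations, taking the $\mathcal{L}$-primitive vector with full pure support from \cite[Proposition 2.7, Proposition 2.10]{CTV}, extending it to a minimal basis of $\mathcal{L}_{pure}$ by \cite[Theorem 2.12]{CTV}, and completing that to a minimal Markov basis of minimal cardinality by \cite[Theorem 4.1, Theorem 4.11]{CTV}, whence membership in $\MM(A)$ follows from the definition of the universal Markov basis. Your added remarks (that $\mathcal{L}$-primitivity implies primitivity in $\mathcal{L}_{pure}$, and that the minimality of cardinality is the crucial point carried by the cited theorems) are accurate refinements of the same argument, not a different route.
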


Let $A\in \MM_{m\times n}(\ZZ)$, $B\in \MM_{p\times n}(\ZZ)$ and
an integer $ r\ge 2$. We let \[ \mathcal{L}_r:=
\mathcal{L}(\Lambda(A,B,r)),\quad
\mathcal{L}_{A,B}:=\Ker_{\ZZ}(A)\cap\Ker_{\ZZ}(B) \ .\] We note
that $\mathcal{L}_r\subset\ZZ^{rn}$ while
$\mathcal{L}_{A,B}\subset\ZZ^n$.
%\begin{eqnarray}\label{supp_r}
%\supp(({\mathcal{L}_r})_{p })=\bigcup_{j=1}^r \{i+(j-1)n: %j\in\supp(\mathcal{L}_{A,B})_{p}\}.
%\end{eqnarray}

\begin{Proposition}\label{kernel}
$\mathcal{L}_{A,B}$ is positive if and only if
$\mathcal{L}_r$ is positive for
any $r\geq 2$.
\end{Proposition}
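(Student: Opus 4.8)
The plan is to exploit the block description of $\Ker_\ZZ(\Lambda(A,B,r))$ already recorded above: writing $C\in\ZZ^{rn}$ as the $r\times n$ matrix $\mathcal{C}$ with rows $\mathcal{C}_1,\dots,\mathcal{C}_r\in\ZZ^n$, one has $C\in\mathcal{L}_r$ if and only if each row $\mathcal{C}_i$ lies in $\mathcal{L}(A)=\Ker_\ZZ(A)$ and the row-sum $\sum_{i=1}^r\mathcal{C}_i$ lies in $\mathcal{L}(B)=\Ker_\ZZ(B)$. I would prove both implications in contrapositive form, and phrase the constructions uniformly in $r$ so that the stated independence of $r\geq 2$ drops out automatically.

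For the implication that positivity of $\mathcal{L}_{A,B}$ forces positivity of $\mathcal{L}_r$, I would start from a hypothetical nonzero $C\in\mathcal{L}_r\cap\NN^{rn}$ and pass to its row-sum ${\bf s}:=\sum_{i=1}^r\mathcal{C}_i$. Since every row $\mathcal{C}_i$ is non-negative and $C\neq{\bf 0}$, some entry of some row is strictly positive; as there is no cancellation among non-negative vectors, ${\bf s}$ is itself non-negative and nonzero. Summing the defining relations yields $A{\bf s}=\sum_i A\mathcal{C}_i={\bf 0}$ and $B{\bf s}={\bf 0}$, so ${\bf s}\in\Ker_\ZZ(A)\cap\Ker_\ZZ(B)=\mathcal{L}_{A,B}$. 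Thus ${\bf s}$ is a nonzero element of $\mathcal{L}_{A,B}\cap\NN^n$, contradicting positivity of $\mathcal{L}_{A,B}$.

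For the converse I would start from a nonzero ${\bf u}\in\mathcal{L}_{A,B}\cap\NN^n$ and build $C\in\ZZ^{rn}$ whose matrix $\mathcal{C}$ has first row ${\bf u}$ and all remaining rows zero. Every row then lies in $\mathcal{L}(A)$---the first because $A{\bf u}={\bf 0}$, the rest trivially---and the row-sum equals ${\bf u}\in\mathcal{L}(B)$ since $B{\bf u}={\bf 0}$; hence $C\in\mathcal{L}_r$. As $C$ is a nonzero vector in $\NN^{rn}$, $\mathcal{L}_r$ is not positive. This construction works verbatim for every $r\geq 2$, so the two directions together give that $\mathcal{L}_{A,B}$ is positive if and only if $\mathcal{L}_r$ is positive for one, equivalently every, $r\geq 2$.

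The argument is a direct computation with the block form of the kernel, so I do not anticipate a serious obstacle. The only point deserving a word of care is the non-cancellation observation in the second paragraph: a sum of vectors in $\NN^n$ vanishes only when each summand is ${\bf 0}$, which is precisely what keeps the collapsed vector ${\bf s}$ both nonzero and non-negative.
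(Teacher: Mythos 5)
Your proof is correct and takes essentially the same approach as the paper: both directions go through the block (row) description of $\mathcal{L}_r$, summing rows to land in $\mathcal{L}_{A,B}$ for one implication and lifting a nonnegative element of $\mathcal{L}_{A,B}$ to a nonnegative element of $\mathcal{L}_r$ for the other. The only cosmetic difference is that the paper lifts ${\bf u}$ by stacking $r$ copies $[{\bf u}\cdots{\bf u}]^T$, whereas you place ${\bf u}$ in a single row and fill the rest with zeros; both constructions work equally well.
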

\begin{proof} Let $C\in \mathcal{L}_{A,B}\cap \NN^n$. We think of the elements
of $\mathcal{L}_r$ as $r\times n$ matrices, as explained in the introduction.
 We have  that $[C\cdots C]^T\in \mathcal{L}_r\cap\NN^{rn}$. Conversely, if $[C_1\cdots C_r]^T\in \mathcal{L}_r\cap\NN^{rn}$
 then $C_1+\dots +C_r\in \mathcal{L}_{A,B}\cap \NN^n$.
\end{proof}

Suppose that $\mathcal{L}_r$ is positive. Let $W \in
\mathcal{L}_r$ and let  $\mathcal{W}$  the corresponding $r\times
n$ matrix with ${{{\bf w}}}_i$ as its $i$-th row.  We define $\sigma (
\mathcal{W} )=\{i:\  {{{\bf w}}}_i\neq 0, \ 1\leq i\leq r\}$. Thus the
type of $\mathcal{W}$ is the cardinality of $\sigma(\mathcal{W})$.
The
 $\Lambda(A,B,r)$--{\em degree} of $W$ is the vector $\Lambda(A,
B, r){W}^+$. Thus the $\Lambda(A, B, r)$--degree of $W$ is in the
span ${\NN}({\bf a}_i\tensor {\bf e}_j\oplus {\bf b}_i: \ 1\leq
i\leq n, j\in \sigma(\mathcal{W})   )$. It is well known that the
$\Lambda(A,B,r)$--degrees   of any minimal Markov basis of
$\Lambda(A,B,r)$  are invariants of
$\Lambda(A,B,r)$, see \cite{St}.

\begin{Theorem}\label{same_complexity} When $\mathcal{L}_r$ is positive  the complexity of a minimal Markov basis of $\Lambda(A,B,r)$  is an invariant of $\Lambda(A,B,r)$.
\end{Theorem}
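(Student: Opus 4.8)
The plan is to exploit the invariance of the $\Lambda(A,B,r)$-degrees. Since $\mathcal L_r$ is positive, the graded Nakayama lemma applies, all minimal Markov bases have the same (minimal) cardinality, and, as recalled just before the theorem, they carry the same multiset of $\Lambda(A,B,r)$-degrees; moreover in the positive case every element of every minimal Markov basis lies in $\MM(\Lambda(A,B,r))$, and the structure theory fixes, for each degree $d$, the number of basis elements of degree $d$ (it is the number of connected components of $G_d$ minus one). Hence it suffices to understand how the type depends on the degree. First I would record the formula $\operatorname{type}(W)=|P(d)|+\#\{j\notin P(d): W^+\text{ or }W^-\text{ has a nonzero entry in block }j\}$, where $P(d)=\{j:(\Lambda W^+)\text{ in }A\text{-block }j\neq 0\}$ depends only on $d$: a block with $d_A^{(j)}\neq 0$ is forced nonzero in every fiber element, while a block with $d_A^{(j)}=0$ can only be occupied by an element of $\mathcal L(A)\cap\NN^n$.

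The case $\mathcal L(A)$ positive is then immediate: there $\mathcal L(A)\cap\NN^n=\{\mathbf 0\}$, so no block is pure-active and $\operatorname{type}(W)=|P(d)|$ is a function of $d$ alone. Thus all Markov elements of a fixed degree have equal type, the multiset of types of any minimal Markov basis is a fixed function of the invariant degree multiset, and the complexity, being its maximum, is an invariant.

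The substance is the remaining case, where $\mathcal L(A)$ is non positive while $\mathcal L_r$ (equivalently, by Proposition~\ref{kernel}, $\mathcal L_{A,B}$) is positive; this already contains the ordinary Lawrence lifting when $\mathcal L(A)$ is non positive. Here the pure-active blocks are genuine, so $\operatorname{type}(W)$ is not a function of $d$ and the point is to show that the \emph{maximal} type at each degree is nonetheless forced. I would combine three inputs: (a) the fact recalled after Theorem~\ref{univ_grobner} that for $\mathbf u\in\MM$ with $\mathcal L$ positive the components of $G_{\mathbf u}$ have pairwise disjoint coordinate supports, so a difference of elements from two components is automatically conformal and its type equals the number of blocks met by the union of their supports; (b) positivity of $\mathcal L_{A,B}$, which through $\mathcal L(A)\cap\NN^n\cap\Ker B=\mathcal L_{A,B}\cap\NN^n=\{\mathbf 0\}$ makes $B$ injective on $\mathcal L(A)\cap\NN^n$, so every pure-active block contributes nontrivially to the $B$-part of $d$; and (c) Lemma~\ref{in_basis_lem}, which identifies the primitive generator spanning the pure support and lets me describe the pure content available in each block. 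With these I would argue that two fiber elements of the same degree with differing block supports must share a coordinate and hence lie in one component, so at an actual Markov degree each component has a well defined block support $\beta_a$ and the type of a Markov element joining components $a,b$ is $|\beta_a\cup\beta_b|$; the complexity at degree $d$ then becomes the maximum of $|\beta_a\cup\beta_b|$ over the edges of a spanning tree of the component graph.

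The main obstacle is this last step: showing that maximum is independent of the spanning tree and of the representatives, rather than merely bounded by $\max_{a,b}|\beta_a\cup\beta_b|$. Because an arbitrary spanning tree with arbitrary representatives yields a valid minimal Markov basis, I must exclude block-support configurations whose bottleneck maximum differs from the global maximum — for example components with supports $\{1,2\},\{2\},\{2,3\}$, where one tree realizes type $3$ and another only type $2$. The crux is therefore to prove that the block supports of the components of a Markov-degree fiber are rigid, with their pure parts nested/laminar, forced by the disjointness of coordinate supports together with the injectivity of $B$ on the pure cone, so that every spanning tree is compelled to use an edge of maximal type. I expect this laminarity — the precise way the pure sublattice distributes among the $r$ blocks under the positivity hypothesis — to be the technical core of the proof.
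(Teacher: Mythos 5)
Your ``immediate'' first case is in fact the entirety of the paper's proof, and your instinct about where the difficulty lies is exactly right. The paper's argument takes $\mathcal{W}\in M_1$ of maximal type, takes $\mathcal{V}\in M_2$ of the same $\Lambda(A,B,r)$-degree, and concludes $\sigma(\mathcal{V})=\sigma(\mathcal{W})$; the step that kills the rows of $\mathcal{V}$ outside $\sigma(\mathcal{W})$ is justified there by the sentence ``every nonzero element in $\Ker_{\ZZ}(A)$ has a nonzero positive part (and a nonzero negative part)'', which is precisely the assertion that $\mathcal{L}(A)$ is positive. That assertion does not follow from positivity of $\mathcal{L}_r$: by Proposition~\ref{kernel} positivity of $\mathcal{L}_r$ only means $\Ker_{\ZZ}(A)\cap\Ker_{\ZZ}(B)$ is positive (e.g.\ $A=0$, $B=I_n$ gives $\mathcal{L}_r$ positive with $\Ker_{\ZZ}(A)=\ZZ^n$). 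So the paper proves your first case and is silent on your second.

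The genuine problem is that the laminarity/rigidity you hoped to prove in the second case is false, and so is the theorem in that generality; the gap you flagged cannot be closed. Take $r=3$, $A=(1\ 1\ 0\ 0)$, $B=(1\ 0\ 2\ 3)$. Then $\Ker_{\ZZ}(A)\cap\NN^4\neq\{0\}$, while $\mathcal{L}_{A,B}$ is positive, hence $\mathcal{L}_3$ is positive. Writing $x_{ji}$ for the variable in block $j$, column $i$, the $\Lambda$-degrees are $\deg x_{j1}=(\mathbf{e}_j;1)$, $\deg x_{j2}=(\mathbf{e}_j;0)$, $\deg x_{j3}=(0;2)$, $\deg x_{j4}=(0;3)$ in $\ZZ^3\oplus\ZZ$. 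The fiber of degree $(\mathbf{e}_1;3)$ is $\{x_{11}x_{13},x_{11}x_{23},x_{11}x_{33}\}\cup\{x_{12}x_{14},x_{12}x_{24},x_{12}x_{34}\}$, and its graph has exactly two connected components: the first three monomials share $x_{11}$, the last three share $x_{12}$, and no variable occurs in both groups. So every minimal Markov basis contains exactly one vector $u_{jk}$ with $x^{u_{jk}^+}=x_{11}x_{j3}$, $x^{u_{jk}^-}=x_{12}x_{k4}$, and \emph{every} choice of $(j,k)$ occurs in some minimal Markov basis, since representatives of the two components may be chosen freely (this is the same structure theory of minimal generating sets that the paper itself uses in Example~\ref{gr_univ_mark} to count $12$ minimal Markov bases). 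Now $u_{11}=(1,-1,1,-1\,|\,0,0,0,0\,|\,0,0,0,0)$ has type $1$, whereas $u_{23}=(1,-1,0,0\,|\,0,0,1,0\,|\,0,0,0,-1)$ has type $3$. Finally, every other Markov fiber of $\Lambda(A,B,3)$ admits connecting elements of type at most $2$: if a degree has at most two nonzero $A$-blocks, each component of its fiber contains an element supported only on those blocks (push every row lying in $\Ker_{\ZZ}(A)\cap\NN^4$ into a block of nonzero $A$-degree, or, for degrees with zero $A$-part, into one single block; each such move stays inside the component because the receiving block's support is preserved), and one checks that degrees with three nonzero $A$-blocks have connected fibers here; on the other hand the fiber $\{x_{13},x_{23},x_{33}\}$ of degree $(0;2)$ forces type $2$. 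Hence $\Lambda(A,B,3)$ has minimal Markov bases of complexity $2$ and of complexity $3$, although $\mathcal{L}_3$ is positive.

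So the correct assessment is this: your first-case argument, which coincides with the paper's proof, establishes Theorem~\ref{same_complexity} under the stronger hypothesis that $\mathcal{L}(A)$ itself is positive; under the stated hypothesis that $\mathcal{L}_r$ is positive the conclusion fails, your second case being exactly the regime where it breaks. The ``technical core'' you could not supply is not missing from your write-up by oversight --- it does not exist.
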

\begin{proof} Let $M_1$, $M_2$ be two minimal Markov bases of $I_{\mathcal{L}_r}$.
It is enough to show that the complexity of $M_1$ is less than or
equal to the complexity of $M_2$. Let $\mathcal{W}=[{{{\bf w}}}_1
\cdots\ {{{\bf w}}}_r]^T \in M_1$ be such that the  type of
$\mathcal{W}$ is equal to the complexity of $M_1$. We let
$\mathcal{V}= [{{\bf v}}_1 \cdots\ {{\bf v}}_r]^T\in M_2$ be such that
 the $\Lambda(A,
B, r)$-degree of $V$ is the same as the $\Lambda(A, B, r)$-degree
of $W$.  Thus
 the $\Lambda(A,
B, r)$--degree of $V$ is in  ${\NN}({\bf a_i}\tensor {\bf
e_j}\oplus {\bf b_i}:\ 1\leq i\leq n,  j\in \sigma(\mathcal{W})
)$. This implies that $  {{\bf v}}^+_i=0$ for every $i\not\in
\sigma(\mathcal{W})$. Since every nonzero element in
$\Ker_{\ZZ}(A)$
 has a nonzero positive part (and a nonzero negative part) it follows that
 ${{\bf v}}_i=0$ for
every $i\not\in \sigma(\mathcal{W})$. Thus
$\sigma(\mathcal{V})\subset \sigma(\mathcal{W})$. Reversing the
argument we get that $\sigma(\mathcal{W})=\sigma(\mathcal{V})$.
Therefore the complexity of $M_1$ is less than or equal to the
complexity of $M_2$.
\end{proof}

As in \cite[Theorem 3.5]{HS} one can prove the following statement  for arbitrary integer matrices $A\in \MM_{m\times n}(\ZZ)$, $B\in \MM_{p\times n}(\ZZ)$. We denote by $\mathcal Gr(A)$ the matrix whose columns are the vectors of the Graver basis of $A$.

\begin{Theorem}\label{fin_graver_comp}
The Graver complexity $g(A,B)$ is the maximum $1$--norm of any element in the Graver basis $\mathcal{G}(B\cdot\mathcal Gr(A))$. In particular, we have $g(A,B)<\infty$.
\end{Theorem}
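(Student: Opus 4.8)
The plan is to push the argument of \cite[Theorem 3.5]{HS} through for arbitrary integer matrices by exhibiting a complexity-preserving correspondence between the Graver elements of all the liftings $\Lambda(A,B,r)$ and the Graver elements of the single matrix $B\cdot\mathcal Gr(A)$. Index the columns $g_1,\dots,g_N$ of $\mathcal Gr(A)$ by the (finitely many) vectors of $\mathcal{G}(A)$. For $\lambda\in\ZZ^N$ one has $\lambda\in\Ker_{\ZZ}(B\cdot\mathcal Gr(A))$ precisely when $\sum_j\lambda_j g_j\in\mathcal{L}(B)$, and to such a $\lambda$ I attach the matrix $\mathcal{W}_\lambda$ having, for each $j$, exactly $|\lambda_j|$ rows equal to $\mathrm{sign}(\lambda_j)\,g_j$. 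Each row of $\mathcal{W}_\lambda$ lies in $\mathcal{L}(A)$ and its row-sum equals $\sum_j\lambda_j g_j\in\mathcal{L}(B)$, so $\mathcal{W}_\lambda$ represents an element of $\mathcal{L}_{\|\lambda\|_1}$ with all rows nonzero; hence its type is exactly $\|\lambda\|_1$. Conversely, the nonzero rows of any $W\in\mathcal{G}(\Lambda(A,B,r))$ will be shown to be elements of $\mathcal{G}(A)$, and recording for each such row the column of $\mathcal Gr(A)$ it equals produces a vector $\lambda\in\NN^N$ with $\|\lambda\|_1$ equal to the type of $W$ and $\sum_j\lambda_j g_j=\sum_i{\bf w}_i\in\mathcal{L}(B)$, so $\lambda\in\Ker_{\ZZ}(B\cdot\mathcal Gr(A))$.

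The two inequalities then follow from a single partition principle. Since every row of $\mathcal{W}_\lambda$ is a Graver element of $A$, hence primitive, any conformal decomposition of $\mathcal{W}_\lambda$ can only regroup whole rows; such a regrouping into two nonempty parts whose row-sums each lie in $\mathcal{L}(B)$ is exactly a conformal decomposition $\lambda=\lambda'+_c\lambda''$ inside $\Ker_{\ZZ}(B\cdot\mathcal Gr(A))$. Thus $\mathcal{W}_\lambda$ is primitive if and only if $\lambda$ is; taking $\lambda\in\mathcal{G}(B\cdot\mathcal Gr(A))$ yields $\mathcal{W}_\lambda\in\mathcal{G}(\Lambda(A,B,\|\lambda\|_1))$ of type $\|\lambda\|_1$, so $g(A,B)\ge\|\lambda\|_1$, and hence $g(A,B)\ge\max_{\lambda}\|\lambda\|_1$. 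For the reverse inequality, take any $W\in\mathcal{G}(\Lambda(A,B,r))$ and discard its zero rows, so that its type equals its number of rows. Assuming the structural fact that each row lies in $\mathcal{G}(A)$, the associated $\lambda\in\NN^N$ satisfies $\|\lambda\|_1=\mathrm{type}(W)$, and the same partition principle (a conformal splitting of $\lambda$ regroups the rows of $W$ into two elements of $\mathcal{L}_r$ summing conformally to $W$) shows $\lambda\in\mathcal{G}(B\cdot\mathcal Gr(A))$. Therefore every type is at most $\max_{\lambda}\|\lambda\|_1$, giving $g(A,B)\le\max_{\lambda}\|\lambda\|_1$.

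The main obstacle is the structural fact used above: that every nonzero row of a primitive $W$ belongs to $\mathcal{G}(A)$, and this is exactly where the arbitrariness of $A\in\MM_{m\times n}(\ZZ)$ and $B\in\MM_{p\times n}(\ZZ)$ (with no positivity hypothesis) must be handled. If a row ${\bf w}_1$ admitted a conformal splitting ${\bf w}_1={\bf u}+_c{\bf v}$ in $\mathcal{L}(A)$, one wants to turn this into a conformal decomposition of the whole matrix; but simply moving ${\bf u}$ off row $1$ destroys the requirement that the row-sum stay in $\mathcal{L}(B)$, since it alters the sum by $-B{\bf u}$. The crux is therefore to absorb this change by compensating conformal parts of the remaining rows, i.e.\ to choose lattice parts of ${\bf w}_2,\dots,{\bf w}_r$ in $\mathcal{L}(A)$ whose $B$-images sum to $-B{\bf u}$; this is the feasibility statement established in \cite{SS,HS} for the classical and nonnegative cases, and the point is that it uses only the conformal structure of $\mathcal{L}(A)$ together with membership in $\mathcal{L}(B)$, so it persists verbatim over $\ZZ$. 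Granting it, the preceding correspondence gives $g(A,B)=\max_{\lambda\in\mathcal{G}(B\cdot\mathcal Gr(A))}\|\lambda\|_1$. Finiteness is then immediate, as the Graver basis of the fixed integer matrix $B\cdot\mathcal Gr(A)$ is finite \cite{Gr,St}, so this maximum---and hence $g(A,B)$---is finite.
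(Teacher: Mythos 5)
Your first inequality, $g(A,B)\ge\max\{\|\lambda\|_1:\lambda\in\mathcal{G}(B\cdot\mathcal Gr(A))\}$, is correct, and it is essentially the argument behind \cite[Theorem 3.5]{HS} (note the paper gives no proof of its own here; it asserts that the proof of \cite{HS} carries over to arbitrary integer matrices, so that is the benchmark). The genuine gap is in the reverse inequality, which you make rest on the ``structural fact'' that every nonzero row of an element of $\mathcal{G}(\Lambda(A,B,r))$ lies in $\mathcal{G}(A)$. That fact is \emph{false} in exactly the generality at issue. Take $A=(1\;\;1\;-1\;-1)$, $B=(1\;-1\;\;0\;\;0)$, $r=2$, and let $W$ have rows ${\bf w}_1=(1,1,1,1)$ and ${\bf w}_2=(0,0,0,0)$. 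Then $W\in\mathcal{L}_2$ since $A{\bf w}_1=0$ and $B{\bf w}_1=0$. In any conformal decomposition $W=U+_cV$ inside $\mathcal{L}_2$, conformality forces the second rows of $U$ and $V$ to vanish, so their first rows must lie in $\Ker_{\ZZ}(A)\cap\Ker_{\ZZ}(B)=\{(a,a,c,2a-c):a,c\in\ZZ\}$, and inside that lattice $(1,1,1,1)$ admits only trivial conformal splittings ($a\in\{0,1\}$ forces one part to vanish). Hence $W\in\mathcal{G}(\Lambda(A,B,2))$, yet ${\bf w}_1=(1,0,1,0)+_c(0,1,0,1)\notin\mathcal{G}(A)$. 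The obstruction is precisely the one you flagged and then deferred: the summand $(1,0,1,0)$ has $B(1,0,1,0)=1\neq0$ and there is no other nonzero row available to absorb it. So the feasibility statement does not ``persist verbatim over $\ZZ$''; positivity does not rescue it either (taking instead $B$ with rows $(1,2,0,0)$, $(0,1,1,0)$, $(0,0,0,1)$, so that $\Ker_{\ZZ}(B)=\ZZ(2,-1,1,0)$, makes every $\mathcal{L}_r$ positive and gives the same failure with ${\bf w}_1=(2,-1,1,0)$); and the proof in \cite{HS} neither states nor uses such a fact.

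The repair, which is what \cite{HS} actually does, keeps your correspondence but replaces ``each row \emph{is} a Graver element'' by ``each row \emph{decomposes conformally into} Graver elements''. Given $W\in\mathcal{G}(\Lambda(A,B,r))$, write each nonzero row ${\bf w}_i$ as a conformal sum of elements of $\mathcal{G}(A)$ (possible for any element of $\mathcal{L}(A)$, by induction on the $1$--norm), and let $\lambda\in\NN^N$ count all these summands over all rows; then $\|\lambda\|_1\ge\mathrm{type}(W)$ (no longer equality, but an inequality in the useful direction) and $\lambda\in\Ker_{\ZZ}(B\cdot\mathcal Gr(A))$ because $(B\cdot\mathcal Gr(A))\lambda=B\bigl(\sum_i{\bf w}_i\bigr)=0$. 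If $\lambda=\mu'+_c\mu''$ with both parts nonzero and in $\Ker_{\ZZ}(B\cdot\mathcal Gr(A))$, distribute the summands into two groups accordingly and re-assemble them row by row: this yields $W=V'+_cV''$ with $V',V''\in\mathcal{L}_r$ (all rows lie in $\mathcal{L}(A)$, and the row sums $\mathcal Gr(A)\mu'$, $\mathcal Gr(A)\mu''$ are killed by $B$), and $V'\neq0\neq V''$ because summands assigned to a given row are conformal to that row and cannot cancel. This contradicts primitivity of $W$, so $\lambda\in\mathcal{G}(B\cdot\mathcal Gr(A))$ and $\mathrm{type}(W)\le\|\lambda\|_1\le\max\{\|\mu\|_1:\mu\in\mathcal{G}(B\cdot\mathcal Gr(A))\}$. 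Together with your first inequality this gives the stated equality, and finiteness then follows from the finiteness of $\mathcal{G}(B\cdot\mathcal Gr(A))$ exactly as you conclude.
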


Suppose that $\mathcal{L}_r$ is non positive. Next we show that $\Lambda(A,B,r)$ has  a minimal Markov basis (of minimal cardinality) whose  complexity is $r$.
\begin{Theorem}\label{prim_lem}
Suppose that  $\mathcal{L}_r$ is non positive. There exists a minimal Markov basis of $\Lambda(A,B,r)$ of minimal cardinality, that contains an element of type $r$.
\end{Theorem}
\begin{proof} We first show that
$\mathcal{L}_r\cap\NN^{rn}$ has an element of type $r$.
By Lemma~\ref{kernel},  $\mathcal{L}_{A,B}$ is non positive.
We let ${{{\bf w}}}\in\mathcal{L}_{A,B}\cap\NN^n$ be  such that $\supp({{{\bf w}}})=\supp((\mathcal{L}_{A,B})_{pure})$.   It follows that
$$  \begin{pmatrix}  {{{\bf w}}}\cr\vdots\cr  {{{\bf w}}}\end{pmatrix}\in \mathcal{L}_r\cap\NN^{rn}$$
has type $r$.  Since $(\mathcal{L}_r)_{pure}=\langle \mathcal{L}_r\cap\NN^{rn}\rangle$, we are done
by Lemma~\ref{in_basis_lem}.
\end{proof}

\begin{Remark}\label{never_pure}
{\em Suppose that $r\ge 2$ and $\mathcal{L}_r$ is non positive. Let ${\bf v}$ be
  the $\mathcal{L}_r$-primitive element of Lemma~\ref{in_basis_lem}. By  adding positive multiples of ${\bf v}$
to the other elements of the Markov basis of Lemma~\ref{in_basis_lem} the new set is still  a minimal Markov basis of
$\Lambda(A,B,r)$ with the property  that all of its elements are of type $r$ (see \cite{CTV}).  
} \end{Remark}

In the next example we give matrices $A, B$ so that  for any $r\ge 2$ $\mathcal{L}(\Lambda(A,B,r))$ is non positive 
and has the following interesting property: it possesses   
 minimal Markov bases of   
complexity ranging from 2 to $r$.

\begin{Example}\label{final_example}{\rm
We let $A_1\in \mathcal{M}_{2}(\ZZ)$, $A_2\in
\mathcal{M}_{2}(\ZZ)$, $ B_2\in \mathcal{M}_{2}(\ZZ)$ and $A,
B\in\mathcal{M}_{2\times 4}(\ZZ)$ be the following matrices:
\[
A_1=\left( \begin{array}{cc}
1  & 1  \\
0  & 0
\end{array} \right), \
A_2=\left( \begin{array}{cc}
0 &0  \\
1 & -1
\end{array} \right), \
 A=\left( A_1|A_2 \right),
  B_2=\left(
\begin{array}{cc}
1 & -1 \\
 0 & 0
\end{array} \right),\]  $B=\left(I_2|B_2 \right)\ .$

We consider the matrix $ \Lambda(A,B,r)$. After column permutations it follows that $\Lambda(A,B,r)=$   
%\[  \left(\ \Lambda(A_1,I_m,r)\ |\ \Lambda(A_2,B_2,r)\  \right)\ :
%\]
\[ 
\left( \begin{array}{cccc}
A & 0 &   & 0 \\
0 & A &  & 0 \\
 & & \ddots &  \\
0 & 0 &  & A \\
B & B & \cdots & B
\end{array} \right) \rightarrow
 \left( \begin{array}{cccc| cccc}
A_1 & 0 &  & 0 &\  A_2 & 0 &  & 0 \\
0 & A_1 & & 0 &\  0 & A_2 & & 0 \\
 & & \ddots & & & & \ddots & \\
0 & 0 &  & A_1 &\  0 & 0 & & A_2\\
I_2 & I_2 & \cdots & I_2 & \ B_2 & B_2 & \cdots & B_2
\end{array} \right).\]
We note that the lattice $\mathcal{L}\left(
\Lambda(A_1,I_2,r)|\Lambda(A_2,B_2,r)\right)$ is isomorphic to the
direct sum of the lattices $\mathcal{L} (\Lambda(A_1,I_2,r))$ and
$\mathcal{L}(\Lambda(A_2,B_2,r))$.

The matrix  $\Lambda(A_1,I_2,r)$
 is the
defining matrix of the toric ideal of the complete bipartite graph
 $K_{2, r}$ and has a unique minimal Markov basis corresponding to cycles of length $4$:
 all its elements have type $2$, see \cite{OH} and
 \cite{V}. We denote by $C_i$ the columns of  $ \Lambda(A_2,B_2,r)$, for $i=1,\ldots, 2r$. We
note that $C_1, C_3, \ldots, C_{2r-1}$  are linearly independent while $C_{2l-1}=-C_{2l}$ for
 $1\leq l\leq r$.  It follows that the lattice $\mathcal{L} (\Lambda(A_2,B_2,r))$ has rank $r$ and is pure. Thus  
 it has infinitely many  Markov bases ( see \cite{CTV}). 
 We consider the following minimal Markov basis of $\Lambda(A_2,B_2,r)$ consisting of elements of type 1:
$$\{\begin{array}{ cccc}\left( \begin{array}{cc}
1 & 1\\
0 & 0\\
\vdots & \vdots\\
0 & 0
\end{array} \right),& \left( \begin{array}{cc}
0 & 0\\
1 & 1\\
\vdots & \vdots\\
0 & 0
\end{array} \right),& \ldots, &\left( \begin{array}{cc}
0 & 0\\
0 & 0\\
\vdots & \vdots\\
1 & 1
\end{array} \right)\end{array}\}.$$

For fixed $1\le a\le r$ and $1\le b\le 4$  we let $E_{a,b}$ be the
matrix of $\mathcal{M}_{r\times 4}(\ZZ)$  which has $1$ on the
$(a,b)$-th entry and $0$ everywhere else. Moreover for $1\leq
i<j\leq r$ and  $1\leq s\leq r$, we let
$P_{i,j}\in\mathcal{M}_{r\times 4}(\ZZ)$ and
$T_s\in\mathcal{M}_{r\times 4}(\ZZ)$
 be the matrices
\[
P_{ i,j }=E_{i,1}-E_{i,2}-E_{j,1}+E_{j,2},\quad
T_s=E_{s,3}+E_{s,4} \ .\]

 It follows that  the set $\mathcal{M}=\{T_1,\ldots,T_r\}\cup\{P_{i,j }:\
1\leq i<j\leq r \}$ is a minimal Markov basis of $\Lambda(A,B,r)$
of cardinality $r+\binom{r}{2}$. The
 elements of $\mathcal{M}$ have type $1$ and $2$, therefore the complexity of this Markov basis is $2$.

%We remark that $|\{ 1\leq i<j\leq r, 1\leq k<l\leq m\}|=\binom{r}{2}\binom{m}{2}$ .
Note  that the  set
\[
\{T_1,T_1+T_2,\ldots,T_1+\cdots+T_k, T_{k+1}, \cdots, T_r\}\cup\{P_{i,j }:\ 1\leq
i<j\leq r \}
\]
is a minimal Markov basis of $\Lambda(A,B,r)$ and the type of its elements range from $1$ to $k$, where $2\leq k\leq r$. Therefore for any integer $k$ 
between $2$ and $r$ there are
minimal Markov bases  of $\Lambda(A,B,r)$ of complexity $k$.

\noindent Moreover if $T=\sum_{s=1}^r T_s$, then the set
\[
\{T,T+T_2,\ldots,T+T_r\}\cup\{T+P_{i,j }:\ 1\leq i<j\leq r \}
\]
is a minimal Markov basis of $\Lambda(A,B,r)$ such that all its elements are of
type $r$ (see \cite{CTV}).

We remark that if $S$ is any integer linear combination of the
elements $T_s$, $1\leq s\leq r$ and $1\leq i<j\leq r$ then
 the element
$S+P_{ i,j }$ belongs to the infinite universal Markov basis of
$\Lambda(A,B,r)$.

}\end{Example}

\begin{Remark}\label{markov-complexity}
{\em As pointed out in the introduction, in the literature there are two definitions of  Markov complexity. The  one introduced  in \cite{SS}, namely 
 the smallest integer $m$ such that there exists a Markov basis  of
$\Lambda (A,B,r)$ of type less than or equal to $m$ for any $r\ge 2$, is always finite:  
there exists a minimal Markov basis inside the Graver basis
and thus (this) Markov complexity  is always smaller than the Graver complexity. When 
$\mathcal{L}_{A,B}$ is a positive lattice then Theorem 
\ref{same_complexity} becomes 
essential in the computation of this Markov complexity:  it guarantees that all minimal Markov bases have the same complexity.
Even when  
$\mathcal{L}_{A,B}$ is a non positive lattice this  Markov complexity  
is finite. For example, the
Markov complexity of  $(A,B)$ of
Example \ref{final_example}    is
equal to 2.
The second definition was given in \cite{HS} where the Markov complexity of  $(A,B)$
 is the largest type of any element in the universal Markov basis of $\Lambda (A,B,r)$ as $r$ varies.
 It is clear form Theorem~\ref{prim_lem} that this Markov complexity  
is infinite if  $\mathcal{L}_{A,B}$ is a non positive lattice. 
 We  
point out that when  $\mathcal{L}_{A,B}$ is a positive lattice
which is the main case of interest in Algebraic Statistics,
by  Theorem~\ref{same_complexity},
all minimal Markov bases of $\Lambda(A, B, r)$ have the same  
complexity for   $r\geq 2$ and thus
the definition of \cite{HS} agrees computationally with the definition  
of \cite{SS}.
} \end{Remark}

{\bf Acknowledgment}. This paper was partially written during the visit of the first and third author at the University of Ioannina. 
The third author was supported by a Romanian grant awarded by UEFISCDI, project number $83/2010$, PNII-RU code TE$\_46/2010$, 
program Human Resources, ``Algebraic modeling of some combinatorial objects and computational applications''.


\begin{thebibliography}{10}

\bibitem{4ti2} 4ti2 team, 4ti2 - a software package for algebraic, geometric and combinatorial problems on linear spaces, available at www.4ti2.de, 2007.

\bibitem{CKT} H.\ Charalambous, A.\ Katsabekis, A.\ Thoma, Minimal systems of binomial generators and the indispensable complex of a toric ideal, Proc. Amer. Math. Soc. {\bf 135}, 3443--3451  (2007).

\bibitem{CTV} H.\ Charalambous, A.\ Thoma, M.\ Vladoiu, Markov bases of lattice ideals, arXiv:1303.2303v2.

\bibitem{DS} P.\ Diaconis, B.\ Sturmfels, Algebraic algorithms for sampling from conditional distributions, Ann. Statist. {\bf 26}, 363--397  (1998).

\bibitem{DSS} M.\ Drton, B.\ Sturmfels, S.\ Sullivant, {\em Lectures on algebraic statistics}, Oberwolfach Seminars, 39. Birkh\"auser Verlag, Basel, viii+171 pp (2009).

\bibitem{Gr} J.\ E.\ Graver, On the foundations of linear and integer linear programming I, Math. Program. {\bf 9}, 207--226 (1975).

\bibitem{HN} R.\ Hemmecke, K.\ Nairn, On the Gr{\" o}bner complexity of matrices, J. Pure Appl. Alg. {\bf 213}, 1558--1563 (2009).

\bibitem{HS} S.\ Ho\c sten, S.\ Sullivant, A finiteness theorem for Markov bases of hierarchical models, J. Combin. Theory Ser. A {\bf 114}, 311--321  (2007).

\bibitem{OH} H.\ Ohsugi, T.\ Hibi, Indispensable binomials of finite graphs, J. Algebra Appl. {\bf 4},  421--434 (2005).

\bibitem{SS} F.\ Santos, B.\ Sturmfels, Higher Lawrence configurations, J. Combin. Theory Ser. A {\bf 103}, 151--164 (2003).

\bibitem{St}  B.\ Sturmfels, {\em Gr{\"o}bner Bases and Convex Polytopes}, University Lecture Series, No 8, AMS, R.I. (1995).

\bibitem{V} R.\ Villarreal, Rees algebras of edge ideals, Comm. Algebra {\bf 23},  3513--3524 (1995).



\end{thebibliography}
\end{document}